\theoremstyle{plain}
\newtheorem{thm}{Theorem}[section]
\newtheorem{theorem}[thm]{Theorem}
\newtheorem{lemma}[thm]{Lemma}
\newtheorem{corollary}[thm]{Corollary}
\newtheorem{proposition}[thm]{Proposition}
\theoremstyle{definition}
\newtheorem{remark}[thm]{Remark}
\newtheorem{definition}[thm]{Definition}
\newtheorem{defn-thm}[thm]{Definition-Theorem}
\begin{document}
	\title{quantum cluster variables via canonical submodules}
	\author{Fan xu, Yutong Yu*}
	\address{Department of Mathematical Sciences\\
		Tsinghua University\\
		Beijing 100084, P.~R.~China}
	\email{fanxu@mail.tsinghua.edu.cn(F. Xu)}
	\address{Department of Mathematical Sciences\\
		Tsinghua University\\
		Beijing 100084, P.~R.~China}
	\email{yyt20@mails.tsinghua.edu.cn(Y. Yu)}
	\keywords{Quantum cluster algebra, Triangulated category, Gentle algebra, Cluster character}
	\thanks{$*$~Corresponding author.}
	\begin{abstract}
		We study quantum cluster algebras from marked surfaces without punctures. We express the quantum cluster variables in terms of the canonical submodules. As a byproduct, we obtain the positivity for this class of quantum cluster algebra.
	\end{abstract}
	
	\maketitle
\section{INTRODUCTION}
\par Cluster algebras were introduced by Fomin and Zelevinsky around 2000\cite{Fomin_2001}. Later, Berenstein and Zelevinsky introduced quantum cluster algebras\cite{BZ04}. The theory of cluster algebra is related to many other branches of mathematics, such as Lie theory, representation theory of algebras, Teichm{\"u}ller theory and mathematical physics.
\par The cluster algebras associated with marked surfaces were studied in \cite{Fomin2006ClusterAA}. A key part of the main result is that there is a bijection between the cluster variables and arcs in the surface which remains true in quantum case. Later, \cite{Musiker_2011} provided a Laurent expansion formula for arcs with respect to any triangulation using combinatorial tools called snake graphs and perfect matchings. After that, \cite{Huang2021AnEF} gave a quantum version of the expansion formulas and proved the positive conjecture in \cite{Fomin_2001} for cluster algebra associated to unpunctured surface. The expansion formulas are sums over the perfect matchings of snake graphs which are determined by the crossings of arcs and triangulations.
\par  The Jacobian algebras associated with the quivers with potentials given by the triangulations of surfaces are gentle algebras. The module categories of gentle algebras are studied in \cite{Butler1987AuslanderreitenSW} and the cluster categories $C$ of quiver with potential are studied in \cite{Amiot}. A correspondence between (closed) curves on surfaces and objects in cluster categories was given by \cite{Brustle2010OnTC}. Specifically, the string objects in $C$ are indexed by curves and can be identified with string modules in the gentle algebra associate with a triangulation. Palu \cite{Palu2008} defined a cluster character for any object in some 2-Calabi-Yau category. The cluster character is a generalization of $CC$-map which was proposed in\cite{Caldero2004ClusterAA}. Later Qin \cite{Qin_2012} gave a quantum Laurent polynomial of any cluster variables using Serre polynomials. All the characters are sums over the dimension vector and depend on the geometric properties of quiver Grassmannians including their Euler characteristics.
\par Then the connection between expansion formulas of  arcs and cluster characters of modules should be examined as both provide Laurent expansion formulas of cluster variables in the same cluster algebra. The answer to this is found in a comparison between perfect matchings and canonical submodules in \cite{Brstle2011AMI}. Using the results in \cite{Haupt2012EulerCO}, we know the Euler characteristics of quiver Grassmannians referring to string modules are equal to the numbers of canonical submodules of string modules. Therefore, the cluster character can be expressed as a summation over canonical submodules.
\par The aim of this paper is to provide a quantum Laurent expansion formula for quantum cluster variables in quantum cluster algebras from unpunctured surfaces using the canonical submodules. The key point in our expansion formula is the weight functions of canonical submodules, which take values on $\mathbb{Z}$. Our strategy of proving the main theorem is similar to Min Huang's approach in\cite{Huang2021AnEF}. However, we are not yet able to establish the relation between the weight functions here and the valuation maps in Huang' paper. We state the main theorem below.
\par For any string objects $M$ and any cluster-tilting objects related by mutation $T'=\mu_k(T)$. Let the string module of $M$ in $End(T)$(or $End(T')$) be $M(w)$(or $M(w')$) where $w$(or $w'$) is the string. The set of canonical submodules of $M(w)$(or $M(w')$) is denoted by $CS(M(w))$(or $CS(M(w'))$).Then, we have the following
\begin{theorem}
	There exist functions $v$ and $v'$ valued on $\mathbb{Z}$ satisfying the following polynomial property:
	\begin{equation*}
		\sum_{U\in CS(M(w))}q^{\frac{v(U)}{2}X^{ind_T(r)+B_T dim(U)}}=\sum_{U'\in CS(M(w'))}q^{\frac{v'(U')}{2}(X')^{ind_{T'}r+B_{T'} dim(U')}}.
	\end{equation*}
\end{theorem}
The details of $v$ and $v'$ will be explained in Section 5. Roughly speaking, we split the string $w$(or $w'$) into several subwords such that in each subword, there is an easy way to define the function on both sides. The function on the whole string is then a sum of functions on the subwords, plus some additional terms. See Definition 5.23 and the case-by-case analysis in Section 5 for details.
\par The following theorem is a corollary of the above
\begin{theorem}
	There exists a function $v$ valued on $\mathbb{Z}$ such that the following expansion formula for the quantum cluster variable corresponding to $M$ with respect to $T$ holds:
	\begin{equation*}
		X_M^T=\sum_{N\in CS(M)}q^{\frac{v(N)}{2}}X^{ind_T(M)+B_TN}.
	\end{equation*}
\end{theorem}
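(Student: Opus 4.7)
The plan is to derive Theorem 1.2 from Theorem 1.1 by reducing to a base case and then propagating the formula along a sequence of mutations. The base case is when $M\in\mathrm{add}(T)$: under the functor $\mathrm{Hom}_C(T,-[1])\colon C/[T]\to \mathrm{mod}\,\mathrm{End}(T)^{op}$, such an $M$ goes to zero, so the associated string $w$ is empty and $CS(M(w))$ contains only the trivial submodule. The proposed right-hand side then collapses to $q^{v(0)/2}X^{ind_T(M)}=x_M$, which equals $X_M^T$ by the defining property of the cluster character (after normalizing $v(0)=0$).

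For a general string object $M$, I would pass through an intermediate cluster-tilting object containing $M$. Every arc in an unpunctured surface can be completed to a triangulation, so there is a cluster-tilting object $T''$ with $M\in\mathrm{add}(T'')$. By the connectedness of the flip graph of triangulations (equivalently, of the exchange graph of cluster-tilting objects), $T$ and $T''$ are joined by a finite sequence of mutations $T=T_0,T_1,\ldots,T_n=T''$. Starting from the base case at $T''$ and applying Theorem 1.1 in reverse at each step, one transports the identity
\begin{equation*}
X_M^{T_i}=\sum_{N\in CS(M(w_i))}q^{v_i(N)/2}X^{ind_{T_i}(M)+B_{T_i}N}
\end{equation*}
from $T''$ back to $T$. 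The point is that the two sides of Theorem 1.1 satisfy the same quantum mutation relation in the ambient quantum torus as the pair $X_M^{T_i},X_M^{T_{i+1}}$, so the equality propagates step by step.

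The main obstacle is making a coherent, intrinsic choice of $v$. Theorem 1.1 only supplies a pair $(v,v')$ adapted to one particular mutation, whereas Theorem 1.2 asserts a single $\mathbb{Z}$-valued weight depending only on $T$ and the submodule. One must check that the construction sketched after Theorem 1.1---a sum of contributions from subwords of $w$ plus correction terms---is well-defined and yields the same $v$ regardless of the mutation path taken to reach the base case. Equivalently, the local transformation rule for $v$ under a single mutation has to be consistent around every loop in the exchange graph; this compatibility is expected from the underlying surface combinatorics (flip-graph connectedness plus the local moves that generate its 2-cells) but constitutes the genuine content of the reduction.
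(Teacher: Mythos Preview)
Your overall strategy is exactly the paper's: reduce to a cluster-tilting object $T_r$ containing $M$ as a summand (where $M(w)=0$, so $CS(M(w))=\{0\}$ and the formula is trivial), then transport the identity back to $T$ along a mutation sequence using Theorem~1.1 at each step. The paper's proof is three sentences and does precisely this.

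Your final paragraph, however, misidentifies the obstacle. Theorem~1.2 is a pure existence statement for a \emph{fixed} $T$: you must exhibit one function $v\colon CS(M(w))\to\mathbb{Z}$, not a coherent family $\{v_T\}$ indexed by all cluster-tilting objects. Path-independence and loop-consistency in the exchange graph are therefore irrelevant---any single mutation path from $T_r$ to $T$ suffices. The genuine subtlety you should have flagged instead is \emph{along-path} compatibility: Theorem~1.1 constructs a specific pair $(v,v')$ tailored to one mutation $T\leftrightarrow T'=\mu_k(T)$, with the cutting of $w$ into subwords depending on $k$. When you chain two mutations $T_{i-1}\to T_i\to T_{i+1}$, the $v'$ produced for the first step and the $v$ produced for the second step are both functions on $CS(M(w_i))$, but they arise from different decompositions of $w_i$ and there is no a~priori reason they agree. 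The paper's proof simply asserts that ``the right-hand equation is invariant under mutations'' and does not address this point explicitly either; so your proposal is no more and no less complete than the paper's own argument, but your diagnosis of where the work lies is off.
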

Since the equation here is manifestly positive, we obtain the positivity property for quantum cluster algebras from unpunctured surface. 

\section{2-CALABI-YAU TRIANGULATED CATEGORY}
Let $\mathcal{C}$ be a Hom-finite,2-Calabi-Yau,Krull-Schmidt triangulated category with a cluster tilting object \textit{T} over a algebraically closed  field. Denote by $[1]$ its shift functor. Let \textit{B} be the endomorphism algebra of \textit{T} and $Q$ be the Gabriel quiver of $B$, then there exists a functor
\begin{equation}
	F'=Hom(T,-): \mathcal{C} \rightarrow modB \nonumber.
\end{equation}
\par Let $T=T_1\oplus T_2\oplus ...\oplus T_n$ where $T_1,...,T_n$ are pairwise non-isomorphic indecomposable direct summands of \textit{T} and $add(T)$ be the full subcategory of $\mathcal{C}$ where objects are direct sums of summands of $T$. Let $S_i$ be the top of $P_i=F'T_i$, for $i=1,2,...,n$ and $proj(B)$ be the subcategory of projective modules in $mod(B)$. The  Grothendieck group of \textit{modB} is denoted by \textit{K}$_0(modB)$. Denote the equivalence class in \textit{K}$_0(modB)$  of an object $X$ by $[X]$.
\par For any objects $M$ and $N$ in $modB$, define
\begin{equation}
	<M,N>=dim_kHom_B(M,N)-dim_kExt_B^1(M,N)\nonumber,
\end{equation}
\begin{equation}
	<M,N>_a=<M,N>-<N,M>\nonumber.
\end{equation}
and \cite{Palu2008} shows $<-,->_a$ can descend to the Grothendieck group.
\par For any object $X$ in $\mathcal{C}$, we have the following triangle
\begin{equation}
	T_1^X\rightarrow T_0^X\rightarrow X\rightarrow T_1^X[1] \nonumber 
\end{equation}
where $T_1^X,T_0^X\in add(T)$. So we can define the index of $X$
\begin{equation}
	ind_TX=[F'T_0^X]-[F'T_1^X] \nonumber.
\end{equation}
\par Sometimes, we omit the subscript when there is no ambiguity. The functor $F'$ induces an equivalence between $add(T)$ and $proj(B)$, as well as between $K_0(add(T))$ and $K_0(proj(B))$. Under this equivalence, we always identify them. Each cluster tilting object $T$ determines a cluster tilting subcategory $add(T)$, and we often use the cluster tilting object to represent its full additive subcategory. 
\begin{definition}
	The cluster tilting objects define a cluster structure on $\mathcal{C}$ if the following holds
	\begin{enumerate}
		\item For each cluster tilting object $T$ and any indecomposable direct summand $M$ where $T=M\oplus L$, there exists a unique(up to isomorphism) indecomposable object $M'$ such that $T'=M'\oplus L$ is a cluster tilting object and we denote $T'=\mu_M(T)$.
		\item There are non-split triangles
		\begin{equation*}
			M'\stackrel{f}{\rightarrow}E\stackrel{g}{\rightarrow}M\rightarrow M'[1] \ and \ M\stackrel{s}{\rightarrow}E'\stackrel{t}{\rightarrow}M'\rightarrow M[1]
		\end{equation*}
		where $g$ and $t$ are minimal right $T\cap T'$-approximates and $f$ and $s$ are minimal left $T\cap T'$-approximates.
		\item For any cluster tilting object $T$, the quiver $Q(T)$ of the endomorphism algebra of $T$ has no loops or 2-cycles.
		\item If $T'=\mu _M(T)$, then $Q(T')=\mu_M(Q(T))$ where the $\mu_M$ here is the Fomin-Zelevinsky mutation in direction $M$.
	\end{enumerate}
\end{definition}
Now assume that $\mathcal{C}$ admits a cluster structure. Then we have a $n$-regular tree $\mathbb{T}_n$. Let $t_0$ be a fixed vertex of $\mathbb{T}_n$. For each vertex $t$ of $\mathbb{T}_n$, we associate a cluster tilting object $T_t$ subject to the following rule:
\begin{enumerate}
	\item $T_{t_0}=T$,
	\item if $t$ is adjacent to $t'$ by an edge labeled $k$, the $T_{t'}=\mu_k(T)$.
\end{enumerate}
Then we call a cluster tilting object $T'$ reachable if it can be obtained by a finite sequence of mutation of $T$, and an indecomposable object $M$ is reachable if it is a direct summand of a reachable cluster tilting object $T'$.
\par A cluster character on $\mathcal{C}$ is a map from the objects of $\mathcal{C}$ to a commutative ring $A$, usually we take $A=Q(x_1,...,x_n)$. For any object $M$ in $\mathcal{C}$, \cite{Palu2008} defines the cluster character as the following:
\begin{equation}
	x^T_M=\sum_{e}\chi(Gr_eF'M)X^{ind_T(M)+B_Te} \nonumber.
\end{equation}

\section{QUANTUM CLUSTER ALGEBRA}
In this section, we recall some definitions and properties of quantum cluster algebra. Let $q$ be a formal variable, $B$ be an $m\times n$ matrix with full rank and $\Lambda$ be a skew-symmetric matrix satisfying
\begin{equation}
	\Lambda(-B)=\begin{bmatrix}
		D \\0
	\end{bmatrix} \nonumber
\end{equation}
where $D$ is a diagonal matrix with positive entries. We call $(B,\Lambda)$ a compatible pair. Since we focus on cluster algebras from surface, we can assume the matrix $D$ is the identity matrix.
\begin{definition}
	Let $L$ be a lattice of rank $m$ with a basis $\left\{e_i|1\leq i\leq m \right\}$ with a skew-symmetric bilinear form $\Lambda$. The quantum torus $\mathcal{T}=\mathcal{T}(L,\Lambda)$ is the $\mathbb{Z}$-algebra generated by $X^g$, $g\in L$ subject to the relation
	\begin{equation*}
		X^gX^h=q^{\Lambda(g,h)/2}X^{g+h}.
	\end{equation*}
\end{definition}
\par An initial quantum seed is a triple $(\Lambda,B,X)$ such that the pair $(B,\Lambda)$ is compatible and $X=\left\{X_1,...,X_m\right\}$ where $X_i=X^{e_i}$ for $1\leq i\leq m$.
\par For any $1\leq k\leq n$, we can define the mutation of the quantum seed in direction $k$. Denoted by $\mu _k(\Lambda,B,X)=(\Lambda^{\prime},B^{\prime},X^{\prime})$, where
\begin{enumerate}
	\item \begin{equation*}
		\Lambda'_{ij}=\begin{cases}
			\Lambda_{ij},      &\mbox{if $i,j\neq k$}, \\
			\Lambda(e_i,-e_k+\sum_{l}{[b_{lk}]_+}e_l), &j=k\neq i.\\
		\end{cases}
	\end{equation*}
	\item \begin{equation*}
		b^{\prime}_{ij}=\begin{cases}
			-b_{ij} &{\rm if} \ i=k \ {\rm or} \ j=k,\\
			b_{ij}+[b_{ik}]_+b_{kj}+b_{ik}[-b_{kj}]_+ &otherwise.
		\end{cases}
	\end{equation*}
	where $B=(b_{ij})$ and $B^{\prime}=(b^{\prime}_{ij})$.
	\item $X'=\left\{X'_1,...,X'_m\right\}$ is given by
	\begin{equation*}
		X'_k=X^{-e_k+\sum_{i}{\left[b_{ik}\right]_+e_i}}+X^{-e_k+\sum_{i}{\left[-b_{ik}\right]_+e_i}}.
	\end{equation*}
	where $X'_i=X_i$ for $i\neq k$.
\end{enumerate}
It is proved in \cite{BZ04} that the mutation here can be applied on the new seed on any direction. If we collect all the seed which can obtained from a finite sequence of mutations on the initial seed, we can build a $n$-regular tree $\mathbb{T}_n$ where the vertices of $\mathbb{T}_n$ match the quantum seed and edges match mutation in some direction. We denote $t_0$ the vertex of the initial seed. For any vertex $t$ in $\mathbb{T}_n$, the quantum seed $X(t)=(X_1(t),\cdots,X_m(t))$ can be obtained by a finite sequence of mutation in direction $k_1,\cdots,k_s$ and we denote by $t=\mu_{k_s}\cdots \mu_{k_1}(t_0)$.
\begin{definition}
	The quantum cluster algebra $\mathcal{A}_q$ is a subalgebra of $\mathcal{T}$ generated by $X_i(t)$ for any $1\leq i\leq m$ and $t\in \mathbb{T}_n$.
\end{definition}
By definition, all $X_i(t)$ equal for any $n+1\leq i\leq m$ and $t\in \mathbb{T}_n$ and we can denote $X_{n+1},\cdots,X_m$ be the common value.
\begin{enumerate}
	\item $X_i(t)$ is called quantum cluster variable for $1\leq i\leq n$ and $t\in \mathbb{T}_n$.
	\item $X_i$ is called frozen variable for $n+1\leq i\leq m$.
	\item $X(t)$ is called a cluster.
	\item For any $t\in \mathbb{T}_n$, $X(t)^a$ is called a quantum cluster monomial for some $a\in \mathbb{N}^m$.
\end{enumerate}
\begin{theorem}(Quantum Laurent Phenomenon)
	Any quantum cluster variable is a Laurent polynomial in $X_i(t), 1\leq i\leq m$ for some fixed $t\in \mathbb{T}_n$.
\end{theorem}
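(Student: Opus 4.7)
The plan is to adapt the strategy of Berenstein--Zelevinsky to the quantum Laurent Phenomenon, mirroring the Fomin--Zelevinsky proof in the classical setting. A direct induction on the length of a mutation path from $t_0$ to $t$ cannot succeed on its own, because a Laurent polynomial in one cluster is generally not Laurent in a neighbouring cluster. Instead, I work with an auxiliary object attached to each seed --- the \emph{quantum upper bound} --- and prove that it is preserved by any one-step mutation. The Laurent Phenomenon then follows from the connectedness of $\mathbb{T}_n$.

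For a seed $\Sigma=(\Lambda,B,X)$, let $\mathcal{L}(\Sigma)\subset\mathrm{Frac}(\mathcal{T})$ denote the ring of Laurent polynomials in $X_1,\ldots,X_m$ with coefficients in $\mathbb{Z}[q^{\pm 1/2}]$, and set
\begin{equation*}
\mathcal{U}(\Sigma)\;=\;\mathcal{L}(\Sigma)\,\cap\,\bigcap_{k=1}^{n}\mathcal{L}(\mu_k(\Sigma)).
\end{equation*}
The base case is immediate: the initial generators $X_i(t_0)$ lie in $\mathcal{L}(\Sigma_{t_0})$, and each one-step mutation $X_k' = X^{-e_k+\sum_i[b_{ik}]_+e_i}+X^{-e_k+\sum_i[-b_{ik}]_+e_i}$ is manifestly a Laurent polynomial in the initial cluster as well.

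The central technical step is the \emph{mutation invariance} $\mathcal{U}(\Sigma)=\mathcal{U}(\mu_k(\Sigma))$ for every direction $k$. Granting this, the theorem follows quickly: for any $t\in\mathbb{T}_n$, each variable $X_i(t)$ lies in $\mathcal{U}(\Sigma_t)$ (it is trivially Laurent in the cluster at $t$, and the exchange formulas at $t$ show it is Laurent in every neighbouring cluster too), so by iterating mutation invariance along the unique path from $t$ back to $t_0$ we obtain $X_i(t)\in\mathcal{U}(\Sigma_{t_0})\subset\mathcal{L}(\Sigma_{t_0})$.

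The difficult part is the mutation-invariance lemma itself. Given $Y\in\mathcal{U}(\Sigma)$, one must check $Y\in\mathcal{L}(\mu_j(\mu_k(\Sigma)))$ for every $j$; the case $j=k$ is automatic from $\mu_k^2=\mathrm{id}$, while for $j\neq k$ the natural approach is to expand $Y$ with respect to the cluster at $\mu_j(\Sigma)$ and rewrite its $X_j$-dependence via the rank-two quantum exchange relation in directions $j,k$. The compatibility condition $\Lambda(-B)=\begin{bmatrix}D\\0\end{bmatrix}$ is needed precisely to control the $q^{1/2}$-twists produced by the quasi-commutation relations, and the calculation ultimately reduces to a careful rank-two quantum identity. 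This is the step with no real classical counterpart, and it is where the noncommutativity of $\mathcal{T}$ demands substantially more bookkeeping than the analogous argument of Fomin--Zelevinsky; all the same, it is a finite check and can be carried out once and for all, after which the proof of the Laurent Phenomenon proceeds as outlined above.
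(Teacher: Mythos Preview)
The paper does not prove this statement at all: it is quoted as background from \cite{BZ04} (Berenstein--Zelevinsky), with no argument supplied. There is therefore no ``paper's own proof'' to compare your proposal against.

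That said, your outline is a faithful sketch of the original Berenstein--Zelevinsky argument via quantum upper bounds, which is precisely how the result is established in \cite{BZ04}. The mutation invariance $\mathcal{U}(\Sigma)=\mathcal{U}(\mu_k(\Sigma))$ is indeed the crux, and the rank-two computation you allude to is carried out in full there (their Theorem~5.1 and the surrounding lemmas). One point you leave implicit but which is genuinely used in \cite{BZ04}: the full-rank hypothesis on $B$ forces the two quantum exchange monomials $X^{-e_k+\sum_i[b_{ik}]_+e_i}$ and $X^{-e_k+\sum_i[-b_{ik}]_+e_i}$ to be ``coprime'' in the appropriate sense, which is what makes the intersection $\mathcal{L}(\Sigma)\cap\mathcal{L}(\mu_k(\Sigma))$ tractable and the rank-two reduction go through. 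Apart from that omission, your plan is correct and matches the cited source.
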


\section{CLUSTER CATEGORY OF SURFACE}
\subsection{Gentle algebra}A finite dimensional algebra $A$ is gentle if $A=kQ/I$ satisfying the following conditions:
\begin{enumerate}
	\item There are at most 2 arrows start or end at each vertices.
	\item $I$ is generated by paths of length 2.
	\item For each arrow $\alpha$, there is at most one arrow $\beta$ and at most one arrow $\gamma$ such that $\alpha \beta\in I$ and $\gamma \alpha \in I$.
	\item For each arrow $\alpha$, there is at most one arrow $\beta$ and at most one arrow $\gamma$ such that $\alpha \beta\notin I$ and $\gamma \alpha \notin I$.
\end{enumerate}
One of the crucial property of gentle algebra is the classification of indecomposable nodules over $A$. It is given by the string and band in $A$ in the following way:
\par A $string$ is a walk $w$ in $Q$ 
\begin{equation*}
	w=x_1\stackrel{\alpha_1}{\leftrightarrow} x_2\stackrel{\alpha_2}{\leftrightarrow}\cdots \stackrel{\alpha_{n-1}}{\leftrightarrow}x_n,
\end{equation*}
where the $x_i$ are the vertices of $Q$ and each $\alpha_i$ is an arrow between $x_i$ and $x_{i+1}$ in either direction such that $w$ does not contain a sequence of the form $\xleftarrow{\beta}\xrightarrow{\beta}$ or $\xrightarrow{\beta_1}\xrightarrow{\beta_2}\cdots \xrightarrow{\beta_s}$ with $\beta_s\cdots \beta_1\in I$ or their dual. A string is $cyclic$ if $x_1=x_n$. A $band$ is a cyclic string $b$ such that each power $b^n$ is a string, but $b$ is not a proper power of some string.
\par The string module $M(w)$ is obtained from the string $w$ by replacing each $x_i$ by a copy of the field $k$. The action of an arrow $\alpha$ on $M(w)$ is induced by the relevant identity morphism if $\alpha$ lies on $w$, and is zero otherwise. Since the string $w$ and $w^{-1}$ correspond to the same module, we always identify two inverse string.
\subsection{Marked surface} Most example of the gentle algebra is given by the marked surface. A marked surface is a pair $(S,M)$ where $S$ is an oriented surface and $M$ be a finite set of marked points intersecting each boundary component of $S$. The marked point in the interior of $S$ is called puncture. We only consider the unpunctured marked surface where all marked points lie on the boundary.
\begin{definition}
	An arc $\gamma$ in $(S,M)$ is an isotopy class of the curve in $S$ satisfying
	\begin{enumerate}
		\item the endpoints are in $M$;
		\item $\gamma$ does not have a self-crossing point except its endpoints;
		\item $\gamma$ does not intersect the boundary of $S$ or $M$ except its endpoints;
		\item $\gamma$ does not cut out an unpunctured monogon or bigon.
	\end{enumerate}
\end{definition}
\begin{definition}
	For any arcs $\gamma$ and $\gamma'$, denote $e(\gamma,\gamma')$ the minimal number of crossings between arcs $\alpha$ and $\alpha'$, where $\alpha$ and $\alpha'$ range over all arcs isotopy to $\gamma$ and $\gamma'$. We say $\gamma$ and $\gamma'$ are compatible if $e(\gamma,\gamma')=0$.
\end{definition}
\begin{definition}
	An (ideal) triangulation is a maximal collection of pairwise compatible arcs(together with all boundary segments).
\end{definition}
\noindent The arcs in $\Gamma$ cut $S$ into triangles and we call triangles whose edges are internal arcs internal triangles. We only consider marked surface $(S,M)$ admits a triangulation $\Gamma$. Then we can construct a quiver with potential $(Q(\Gamma),W(\Gamma))$ associate with $\Gamma$ in the following way:
\begin{enumerate}
	\item The vertices of the quiver $Q(\Gamma)$ are the internal arcs of $\Gamma$, and an arrow $i\rightarrow j$ exists if there is a triangle in $\Gamma$ containing two arcs $i$ and $j$ such that $i$ is clockwise to $j$.
	\item Each internal triangle $\Delta$ gives rise to a 3-cycle $\alpha_{\Delta}\beta_{\Delta}\gamma_{\Delta}$, then the potential is 
	\begin{equation*}
		W=\sum_{\Delta}\alpha_{\Delta}\beta_{\Delta}\gamma_{\Delta}.
	\end{equation*}
	where the sum range over all internal triangles.
\end{enumerate}
Let $A(\Gamma)$ be the Jacobian algebra of $(Q(\Gamma),W(\Gamma))$. Then we have the following lemma:
\begin{lemma}[\cite{Assem_2010}]
	Let $\Gamma$ be a triangulation of an unpunctured marked surface. Then $A(\Gamma)$ is a gentle algebra.
\end{lemma}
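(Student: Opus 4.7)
The plan is to verify the four defining conditions of a gentle algebra by a local analysis at each arrow and vertex of $Q(\Gamma)$. The underlying combinatorics is simple: a vertex of $Q(\Gamma)$ is an internal arc of $\Gamma$, which bounds at most two internal triangles, and each ordered pair of arcs that shares an internal triangle determines exactly one arrow of $Q(\Gamma)$, arising from that triangle alone. I will use the hypotheses that $(S,M)$ is unpunctured and that no arc of $\Gamma$ bounds a bigon, so that two distinct internal arcs cannot bound two internal triangles simultaneously.

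First I would unpack the Jacobian ideal $I$ coming from $W = \sum_{\Delta}\alpha_{\Delta}\beta_{\Delta}\gamma_{\Delta}$. Because each arrow of $Q(\Gamma)$ appears in at most one of the 3-cycles summed in $W$, the cyclic derivative with respect to any arrow is a single path of length~$2$, and hence $I$ is generated by length-$2$ paths. This immediately gives condition~(2). The degree bound in condition~(1) follows at once from the observation that an internal arc is an edge of at most two internal triangles, producing at most two outgoing and at most two incoming arrows at each vertex.

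Next I would verify conditions~(3) and~(4) by a local analysis at a fixed arrow $\alpha\colon i\to j$. This arrow arises from a unique internal triangle $\Delta$, in which $\alpha$ sits in a $3$-cycle $\alpha\beta_{\Delta}\gamma_{\Delta}$. The only composition of the form $\alpha\beta$ lying in $I$ is $\alpha\beta_{\Delta}$, and $\beta_{\Delta}$ is the unique outgoing arrow at $j$ coming from $\Delta$. Any other outgoing arrow $\beta'$ at $j$ must arise from the second internal triangle containing $j$, if any; so there is at most one such $\beta'$, and necessarily $\alpha\beta'\notin I$. The symmetric analysis at the head of $\alpha$ handles the constraint on $\gamma\alpha$. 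Together these uniqueness statements deliver both~(3) and~(4).

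The main obstacle, though a minor one, is the bookkeeping around triangles of $\Gamma$ that contain one or more boundary segments. Such boundary-adjacent triangles contribute no $3$-cycle to $W$ and may reduce the number of arrows incident to a vertex. I would need to check that these degenerate cases only strengthen the gentle conditions rather than break them, which amounts to confirming that boundary segments are not vertices of $Q(\Gamma)$ and therefore cannot introduce extra arrows or extra length-$2$ relations. Once notation for ``internal'' is fixed carefully and the no-bigon hypothesis is used to rule out double arrows between the same pair of arcs, the verification of all four conditions reduces to the local count described above.
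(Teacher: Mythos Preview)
The paper does not give a proof of this lemma; it is stated as a known background fact and the text moves on immediately. Your proposal supplies the standard argument (verifying the four gentle conditions by a local count of triangles adjacent to an arc and inspecting the cyclic derivatives of $W$), and it is correct as written. There is nothing to compare against in the paper itself; your sketch is exactly the proof one finds in the literature for this result.
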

Since $A(\Gamma)$ is a gentle algebra, the indecomposable module over $A(\Gamma)$ is given by string and band in $Q(\Gamma)$. In addition, the string module is given by curves in $S$. For any curve $r$(considered as an isotopy class) which intersects the arcs of $\Gamma$ transversally such that the number of crossings is minimal. Following from one endpoint of $r$, denote the crossings orderly by $x_1,x_2\cdots x_s$, then we obtain a string $w(r): x_1\leftrightarrow x_2\leftrightarrow\cdots \leftrightarrow x_n$. In fact, this construction can obtain all string module.
\begin{lemma}[\cite{Brustle2010OnTC}]
	Let $\Gamma$ be a triangulation of $(S,M)$. Then there exists a bijection ${r}\mapsto w(r)$ between the homotopy classes of curves not homotopic to an arc in $\Gamma$ and the strings of $A(\Gamma)$.
\end{lemma}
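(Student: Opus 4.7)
The plan is to establish that $r \mapsto w(r)$ is well defined, construct an inverse by tracing curves through the triangles of $\Gamma$, and verify both maps are mutually inverse. The statement is essentially a reformulation of results of Br\"ustle--Zhang and Assem--Br\"ustle--Charbonneau--Plamondon, so the argument is a careful verification modelled on theirs.

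First I would fix a minimal-crossing representative of $r$ (which exists because $(S,M)$ has no punctures) and list the arcs $x_1,\ldots,x_n$ of $\Gamma$ that it crosses in the order encountered. Because $\Gamma$ triangulates $S$, between two consecutive crossings the curve lies inside a single triangle $\Delta_i$ whose edges include both $x_i$ and $x_{i+1}$; the clockwise order at $\Delta_i$ then selects an arrow between $x_i$ and $x_{i+1}$ in $Q(\Gamma)$, producing a walk $w(r)$. To verify $w(r)$ is a string I check the two forbidden patterns. A subwalk $\xleftarrow{\beta}\xrightarrow{\beta}$ would mean the curve crosses the same edge of the same triangle back and forth, so a small isotopy removes two crossings and contradicts minimality. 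A subwalk $\xrightarrow{\beta_1}\xrightarrow{\beta_2}$ with $\beta_2\beta_1\in I$ corresponds to the curve turning the same way around a single internal triangle (the relations generating $I$ come from the three-cycles of $W(\Gamma)$), and can be pushed across that unpunctured triangle to reduce crossings.

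For the inverse, I would send a string $w=x_1\stackrel{\alpha_1}{\leftrightarrow}\cdots\stackrel{\alpha_{n-1}}{\leftrightarrow}x_n$ to a curve built triangle by triangle: each $\alpha_i$ designates a triangle $\Delta_i$ containing both $x_i$ and $x_{i+1}$, and I draw an arc segment inside $\Delta_i$ joining an interior point of $x_i$ to one of $x_{i+1}$. These segments glue to a path in $S$, which I extend past $x_1$ and $x_n$ into the triangle lying on the opposite side of $x_1$ from $\Delta_1$ (respectively opposite $\Delta_{n-1}$ from $x_n$), terminating at the marked point reached by pushing the extension to the boundary. The string conditions ensure no additional crossings arise in this extension.

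The main obstacle is the endpoint bookkeeping together with the exclusion of curves homotopic to an arc of $\Gamma$: such curves admit a representative with zero crossings, giving an empty walk that is not a string in the sense of the definition, so they must be discarded on the left-hand side. A second subtlety is that the reversal of a curve corresponds to the reversal $w\mapsto w^{-1}$ of the associated string, and both are identified in the statement. Once this bookkeeping is in place, the composition $r\mapsto w(r)\mapsto r(w(r))$ is isotopic to $r$ by the triangle-by-triangle construction of the intermediate segments, and the opposite composition is immediate.
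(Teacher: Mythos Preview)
The paper does not actually prove this lemma: it is stated without proof as background, essentially quoting the known bijection of Assem--Br\"ustle--Charbonneau-Jodoin--Plamondon and Br\"ustle--Zhang. So there is no ``paper's own proof'' to compare against; your proposal supplies an argument where the paper simply appeals to the literature.

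Your sketch is the standard one and is broadly correct. The one place I would tighten is the endpoint construction in the inverse map. Saying the curve ``terminates at the marked point reached by pushing the extension to the boundary'' does not yet pin down which vertex of the terminal triangle to use: that triangle has three corners, and different choices can yield non-homotopic curves with the same crossing sequence. The correct prescription is that the string data at the endpoint (namely, which arrow at $x_1$ is \emph{not} used, together with the gentle condition that at most one such compatible continuation exists) singles out the corner of the terminal triangle opposite the unused side; when no internal continuation exists, the remaining side is a boundary segment and the curve ends at the appropriate boundary marked point. Once you make this explicit, the mutual-inverse verification goes through as you describe.
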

Under this bijection, we can identify curves and strings emerging in our following section. We denote by $M(r)$ the string module corresponding to $w(r)$. Then the string with only one point labeled $i$ and no arrows matches the simple module $S_i$.
\begin{definition}
	A $flip$ turns a triangulation $\Gamma$ to another $\Gamma'$ such that $\Gamma'=\Gamma \backslash \left\{\gamma\right\}\cup \left\{\gamma'\right\}$ for some $\gamma\in \Gamma$ and denote by $\Gamma'=\mu_\gamma(\Gamma)$.
\end{definition} 
\begin{remark}
	For unpunctured surface $(S,M)$, flips can be done at any internal arcs of any triangulation. So if the triangulation $\Gamma$ has $n$ internal arcs, we can flip the triangulation at $n$ different direction and do the same on the new triangulation. So we can associate a $n$-regular tree $\mathbb{T}_n$ with the triangulation of $(S,M)$: the vertices of $\mathbb{T}_n$ are labeled by triangulations and edges are labeled by flips. Then if we take the $n$-regular tree  to be $\mathbb{T}_n$ associate with quantum cluster algebra and use the bijection in the above lemma, we can say the internal arcs not homotopy to arcs in $\Gamma$ correspond to quantum cluster variables which are not in the initial cluster. If we see the internal arcs as a empty string associate with some vertex, then it can be seen as an initial cluster variable. In this way, we identify a cluster variable with arcs(or its string) and a cluster with a triangulation. Then we can express the quantum cluster variables by string modules.
\end{remark}
\subsection{Cluster category of surface}For the quiver with potential $(Q(\Gamma),W(\Gamma))$, there is a corresponding Ginzburg dg-algebra $\Lambda_\Gamma$ and we denote by $D(\Lambda_\Gamma)$ its derived category.
\begin{definition}
	The $perfect \ derived \ category \ per(\Lambda_\Gamma)$ is the smallest thick subcategory of $D(\Lambda_\Gamma)$ containing $\Lambda_\Gamma$. The $finite \ dimensional \ derived \ category \ D^b(\Lambda_\Gamma)$ is the full subcategory of $D(\Lambda_\Gamma)$ of the dg-modules whose homology is of finite total dimension.
\end{definition} 
\noindent The $cluster \ category \ C_{Q(\Gamma),W(\Gamma)}$ is 
\begin{equation*}
	per(\Lambda_\Gamma)/D^b(\Lambda_\Gamma),
\end{equation*}
In addition, the cluster category does not depend on the triangulation of $(S,M)$, so we always denote $\mathcal{C}=C_{(S,M)}$ to be the cluster category associate with any triangulation.
\begin{lemma}
	$\mathcal{C}$ is Hom-finite, 2-Calabi-Yau ,Krull-Schmidt triangulated category. The image $T_\Gamma$ in $\mathcal{C}$ of the module $\Lambda$ is a cluster-tilting object. In addition, there is a equivalence
	\begin{equation*}
		F=Ext_{\mathcal{C}}^1(T_\Gamma,-):\mathcal{C}/add(T_\Gamma)\rightarrow modA(\Gamma).
	\end{equation*} 
\end{lemma}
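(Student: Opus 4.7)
The plan is to derive this lemma essentially by citation, from two standard results: Amiot's construction of generalized cluster categories from Jacobi-finite quivers with potential, and the Keller-Reiten equivalence describing the module category of the endomorphism algebra of a cluster-tilting object in a 2-Calabi-Yau triangulated category. The content of the argument lies entirely in verifying the hypotheses of these theorems for our specific $(Q(\Gamma),W(\Gamma))$.

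First, I would check that $(Q(\Gamma),W(\Gamma))$ is Jacobi-finite, i.e.\ that $A(\Gamma)$ is finite-dimensional. The preceding lemma already identifies $A(\Gamma)$ as a gentle algebra. For an unpunctured surface the potential $W=\sum_{\Delta}\alpha_{\Delta}\beta_{\Delta}\gamma_{\Delta}$ imposes a cyclic relation at every arrow of every internal 3-cycle, forcing every sufficiently long path in $A(\Gamma)$ to vanish; combined with the gentle condition this yields $\dim_k A(\Gamma)<\infty$. This is the one step where the unpunctured hypothesis genuinely enters.

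Second, I would invoke Amiot's theorem: for any Jacobi-finite quiver with potential, the quotient $\mathrm{per}(\Lambda_\Gamma)/D^b(\Lambda_\Gamma)$ is a Hom-finite, Krull-Schmidt, 2-Calabi-Yau triangulated category, and the image $T_\Gamma$ of the free dg-module $\Lambda_\Gamma$ is a cluster-tilting object whose endomorphism algebra is canonically identified with the Jacobian algebra $A(\Gamma)$. This settles the first two assertions of the lemma simultaneously.

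Third, for the equivalence $F=\mathrm{Ext}^1_C(T_\Gamma,-):C/\mathrm{add}(T_\Gamma)\to \mathrm{mod}\,A(\Gamma)$, I would cite the Keller-Reiten theorem: in any Hom-finite, Krull-Schmidt, 2-Calabi-Yau triangulated category with a cluster-tilting object $T$, the shifted Yoneda functor $\mathrm{Hom}_C(T,-[1])=\mathrm{Ext}^1_C(T,-)$ descends to an equivalence $C/\mathrm{add}(T)\simeq \mathrm{mod}\,\mathrm{End}_C(T)$. Combined with the identification $\mathrm{End}_C(T_\Gamma)\cong A(\Gamma)$ from the previous step, this is exactly $F$. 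No step presents a genuine obstacle; the only real care needed is (a) the finite-dimensionality verification in step one, and (b) fixing conventions (left vs.\ right modules, $\mathrm{End}$ vs.\ $\mathrm{End}^{op}$) in step three so that the target really is $\mathrm{mod}\,A(\Gamma)$ and not its opposite.
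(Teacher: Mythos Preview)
Your proposal is correct and matches the paper's treatment: the paper states this lemma without proof, treating it as a known result drawn from Amiot's construction of the generalized cluster category (cited earlier in the paper) together with the standard Keller--Reiten equivalence. Your outline of the citation chain, including the verification that $A(\Gamma)$ is finite-dimensional so that Amiot's hypotheses apply, is exactly the intended justification.
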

From this lemma, we can say the indecomposable objects in $\mathcal{C}$ are either indecomposable module over $A(\Gamma)$ or direct summands of $T_\Gamma$. Actually, \cite{Brustle2010OnTC} proved 
\begin{lemma}
	A parametrization of the isoclasses of indecomposable object in $C$ is given by "string object" and "band object", where
	\begin{enumerate}
		\item the string objects are indexed by the homotopy class of curves in $(S,M)$ which are not homotopic to a boundary segment, subject to the equivalence relation $\gamma\sim \gamma^{-1}$
		\item the band objects are indexed by $k^*\times \Pi_1^*(S,M)/\sim$, where $k^*=k\backslash \left\{0\right\}$ and $\Pi_1^*(S,M)/\sim$ is given by the nonzero element of the fundamental group of $(S,M)$ subject to the equivalence relation generated by $a\sim a^{-1}$ and cyclic permutation.
	\end{enumerate}
\end{lemma}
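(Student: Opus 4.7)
The plan is to reduce the classification of indecomposable objects in $C$ to two pieces: those lying in $\mathrm{add}(T_\Gamma)$ and those that do not. By the preceding lemma, $C$ is Krull-Schmidt, so every indecomposable is either (up to isomorphism) a direct summand of $T_\Gamma$ or it is sent by $F = \mathrm{Ext}^1_C(T_\Gamma,-)$ to an indecomposable object of $\mathrm{mod}\,A(\Gamma)$ under the induced equivalence $C/\mathrm{add}(T_\Gamma) \to \mathrm{mod}\,A(\Gamma)$. The indecomposable summands of $T_\Gamma$ correspond exactly to the internal arcs of $\Gamma$; I would identify each such summand with the ``empty string'' object whose underlying curve is the corresponding arc of $\Gamma$, which explains why these fit naturally into the string-object list once one allows curves homotopic to arcs of $\Gamma$ as degenerate strings.

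For the remaining indecomposables, I would invoke the fact that $A(\Gamma)$ is gentle (proved in the earlier lemma). The Butler-Ringel classification for gentle (more generally, string) algebras gives that every indecomposable $A(\Gamma)$-module is isomorphic to either a string module $M(w)$ for a string $w$, or a band module $M(b,\lambda,n)$ indexed by a band $b$ (up to inverse and cyclic rotation), a scalar $\lambda \in k^*$, and a Jordan block size $n \geq 1$. I would then combine the bijection in the lemma between strings of $A(\Gamma)$ and homotopy classes of non-arc curves with an analogous correspondence identifying bands with primitive elements of $\Pi_1^*(S,M)/\sim$; the latter is obtained by reading off the cyclic sequence of arc crossings of a closed curve, showing that this sequence is a band of $A(\Gamma)$ precisely when the curve represents a non-trivial free homotopy class, and the primitivity of the band matches primitivity in $\Pi_1^*(S,M)$.

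The one piece of bookkeeping is the band parameter. On the module side the scalar $\lambda \in k^*$ and the Jordan block size $n$ are independent data, while on the geometric side the statement only lists $k^* \times \Pi_1^*(S,M)/\sim$. I would reconcile this by observing that for a fixed closed curve class, the various $n$ correspond to taking the $n$-th power of the underlying loop in $\Pi_1^*$, so the parameter $n$ is absorbed into the free-homotopy class, leaving only the scalar $\lambda$ as continuous data. A brief verification that the equivalence relations match (string versus inverse string, and band versus inverse band with cyclic rotation) then finishes both sides of the parametrization.

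The main obstacle, and the step I expect to occupy most of the work, is the geometric-to-algebraic dictionary for closed curves: one must show that transverse closed curves (after minimizing intersections with $\Gamma$) produce well-defined cyclic strings, that these cyclic strings satisfy the band condition (every power remains a string), and conversely that every band is realized by such a closed curve. This is essentially a careful analysis of how a closed curve crosses the triangles of $\Gamma$ and how the local crossing patterns translate into the arrows and relations of $(Q(\Gamma), W(\Gamma))$; the string case was already handled by the preceding lemma, so one would try to adapt that argument to the cyclic setting while taking care of the primitivity condition.
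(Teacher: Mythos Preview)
The paper does not prove this lemma at all; it simply attributes the result to \cite{Brustle2010OnTC} (Br\"ustle--Zhang) and moves on. Your outline is in fact a reasonable sketch of how Br\"ustle--Zhang establish the parametrization: reduce via the equivalence $C/\mathrm{add}(T_\Gamma)\simeq \mathrm{mod}\,A(\Gamma)$, invoke the Butler--Ringel classification for the gentle algebra $A(\Gamma)$, and then translate strings and bands into open and closed curves on $(S,M)$. So your approach is not different from the literature; it is the standard one, just not reproduced in this paper.

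One point worth tightening: your explanation that the Jordan block parameter $n$ is ``absorbed into the free-homotopy class'' by passing to the $n$-th power of the loop is the correct bookkeeping, but note that $b^n$ is then no longer a band in the algebraic sense (bands are by definition not proper powers). The match is that a non-primitive element $a^n\in\Pi_1^*(S,M)$ with $a$ primitive corresponds to the band module $M(b_a,\lambda,n)$ built from the primitive band $b_a$, not to a string module on the cyclic word $b_a^n$. This is exactly how Br\"ustle--Zhang set it up, but your phrasing could be read as identifying $M(b,\lambda,n)$ with a module on the longer word, which would be incorrect.
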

\begin{remark}
	Let $r$ be a curve viewed as a string object in $\mathcal{C}$, then for any cluster-tilting object $T_\Gamma$, $Ext^1_{\mathcal{C}}(T_\Gamma,r)=M(w)$ where $M(w)$ is a string module over $A(\Gamma)$ corresponding to a string $w$ which is obtained from the intersections of $r$ and $\Gamma$. We sometimes say the index of $M(w)$ or $ind_{T_\Gamma}M(w)$ which actually means the index of $r$ with respect to $T_\Gamma$.
\end{remark}
The functor $Ext_{\mathcal{C}}^1(T_\Gamma,-)$ induces a canonical equivalence from $addT_\Gamma[1]$ to the injective module subcategory $inj(A(\Gamma))$. Under this equivalence, we can have the following lemma
\begin{lemma}[\cite{Brstle2011AMI}]
	Let $M(w)$ have a injective resolution
	\begin{equation*}
		0\longrightarrow M(w)\longrightarrow I_0\longrightarrow I_1
	\end{equation*}
	The index of $r$ is  equal to $[I_0]-[I_1]$
\end{lemma}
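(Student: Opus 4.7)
The plan is to apply the functor $Hom_C(T_\Gamma,-)$ to the triangle defining the index and identify the resulting four-term exact sequence with the beginning of an injective resolution of $M(w)$. Starting from the triangle $T_1\to T_0\to r\to T_1[1]$ with $T_0,T_1\in addT_\Gamma$, so that $ind_{T_\Gamma}(r)=[F'T_0]-[F'T_1]$ by definition, I would first rotate to
\[
r\longrightarrow T_1[1]\longrightarrow T_0[1]\longrightarrow r[1],
\]
placing the middle terms in $addT_\Gamma[1]$, the subcategory that the preceding discussion identifies equivalently with $inj\,A(\Gamma)$ via $F=Ext^1_C(T_\Gamma,-)$.

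Next I would apply $Hom_C(T_\Gamma,-)$ to this rotated triangle and extract the long exact sequence. The cluster-tilting property of $T_\Gamma$ yields the vanishings $Hom_C(T_\Gamma,T_i[1])=Ext^1_C(T_\Gamma,T_i)=0$, and by definition $Hom_C(T_\Gamma,r[1])=M(w)$. The 2-Calabi--Yau property combined with the equivalence $F|_{addT_\Gamma[1]}\simeq inj\,A(\Gamma)$ identifies $Hom_C(T_\Gamma,T_i[2])=F(T_i[1])$ with the indecomposable injective $A(\Gamma)$-module $I_{T_i}$ attached to the summand $T_i$ of $T_\Gamma$. Splicing these pieces of the long exact sequence yields
\[
0\longrightarrow M(w)\longrightarrow F(T_1[1])\longrightarrow F(T_0[1])\longrightarrow Hom_C(T_\Gamma,r[2])
\]
in $mod\,A(\Gamma)$, which is the beginning of an injective resolution of $M(w)$.

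Finally I would pass to Grothendieck classes. Under the canonical bijection between indecomposable summands of $T_\Gamma$ and vertices of $Q(\Gamma)$, the class $[F'T_i]$ of the indecomposable projective corresponds to the class $[F(T_i[1])]=[I_{T_i}]$ of the matching indecomposable injective; this is the identification of $K_0(proj\,A(\Gamma))$ with $K_0(inj\,A(\Gamma))$ implicit in the statement. Cancelling any common direct summands in the displayed four-term sequence does not change the alternating sum $[F(T_1[1])]-[F(T_0[1])]$, so $[I_0]-[I_1]$ equals $[F(T_1[1])]-[F(T_0[1])]$, which matches $ind_{T_\Gamma}(r)=[F'T_0]-[F'T_1]$ under the above identification.

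The main obstacle is to justify that the four-term exact sequence above recovers the minimal injective resolution up to cancellation of common summands on both sides. For this I would start from a triangle arising from a minimal right $addT_\Gamma$-approximation of $r$, so that its rotation is a minimal left $addT_\Gamma[1]$-approximation and $F$ converts it into the minimal injective copresentation of $M(w)$. A secondary issue is to show that the cokernel term $Hom_C(T_\Gamma,r[2])$ does not force extra summands into $I_1$ beyond what already appears in $F(T_0[1])$; this will follow from the minimality together with 2-Calabi--Yau duality, which makes the vanishings at the tail of the sequence dual to the cluster-tilting vanishings $Ext^1_C(T_\Gamma,T_i)=0$ used at the head.
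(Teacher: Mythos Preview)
The paper does not prove this lemma; it is simply cited from \cite{Brstle2011AMI}. Your argument is the standard one and is in fact exactly the construction the paper invokes later, in the proof of the corollary following the restatement of this lemma in Section~5: starting from the triangle $T_1\to T_0\to r\to T_1[1]$, one applies $F=Ext^1_C(T_\Gamma,-)$ to the rotation and obtains the injective presentation $0\to F(r)\to F(T_1[1])\to F(T_0[1])$. So your approach is correct and aligned with how the paper itself uses the result.

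One small point worth tightening: trace the signs carefully. From your exact sequence you get $I_0=F(T_1[1])$ and $I_1=F(T_0[1])$, so under the identification $[F'T_i]\leftrightarrow[F(T_i[1])]$ the index $[F'T_0]-[F'T_1]$ corresponds to $[I_1]-[I_0]$, not $[I_0]-[I_1]$. This is consistent with the paper's later formula $(ind_T(r))_j=\dim Hom(S_j,I_1)-\dim Hom(S_j,I_0)$, and suggests the sign in the lemma's statement is a typo rather than an error in your reasoning. Your discussion of minimality is also more careful than what the paper requires; since the claim is only about the alternating sum of classes, any injective presentation arising from an $addT_\Gamma$-approximation suffices, and cancellation of common summands is harmless at the level of $K_0$.
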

\subsection{Canonical submodule}In this section, we discuss the submodule of string modules.
\begin{definition}[\cite{anaki2018LatticeBF}]
	Given a string $w$ in $Q$ and its string module $M=M(w)$. We call $canonical \ embedding$ of a submodule $N$ of $M$, if the map $N\hookrightarrow M$ is induced by the identity map on the non-zero components of $N$. In this way, $N$ is called the $canonical \ submodule$ of $M$. We denote by $CS(M)$ the set of canonical submodule of $M$.
\end{definition}
\begin{definition}
	Let $w$ be a string in $Q$ 
	\begin{equation*}
		w=x_1\stackrel{\alpha_1}{\leftrightarrow} x_2\stackrel{\alpha_2}{\leftrightarrow}\cdots \stackrel{\alpha_{n-1}}{\leftrightarrow}x_n,
	\end{equation*}
	A subword $w_I$ of $w$ indexed by an interval $I=[j,s]$ is a string given by
	\begin{equation*}
		x_j\xleftrightarrow{\alpha_j} x_{j+1}\xleftrightarrow{\alpha_{j+1}} \cdots \xleftrightarrow{\alpha_{s-1}}x_s.
	\end{equation*}
	In case $M(w_I)$ is a submodule of $M(w)$, we call $w_I$ a substring of $w$.
\end{definition}
Moreover, for any subset $I\in \left\{1,2,\cdots,n\right\}$, we can write $I$ as a disjoint union of intervals of maximal length $I=I_1\cup I_2\cup\cdots \cup I_t$ that is
\begin{enumerate}
	\item $I_l$ is an interval for $1\leq l\leq t$.
	\item $max\left\{i|i\in I_l\right\}+2\leq min\left\{i|i\in I_{l+1}\right\}$ for each $1\leq l\leq t-1$.
	\item $I_j\cap I_k=\emptyset$ if $j\neq k$
\end{enumerate}
Then for any subset $I$ with the decomposition $I=I_1\cup I_2\cup\cdots \cup I_t$, consider the string module
\begin{equation*}
	M_I(w)=\bigoplus^t_{l=1}M(w_{I_l}).
\end{equation*}
Set
\begin{equation*}
	S(w)=\left\{I\in \left\{1,2,\cdots ,n\right\}|M_I(w) \ is \ a \ submodule \ of \ M(w)\right\}.
\end{equation*}
It is clear that each substring induces a canonical submodule of $M(w)$. Moreover, since the decomposition is disjoint with maximal length, the supports of any substring modules is pairwise disjoint. So any subset in $S(w)$ induces a canonical submodule of $M(w)$. The following lemma shows the canonical submodule is equivalent to $S(w)$.
\begin{lemma}
	For any string $w$ in $Q$, there is a bijection 
	\begin{equation*}
		f:S(w)\rightarrow CS(M)
	\end{equation*}
\end{lemma}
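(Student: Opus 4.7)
The plan is to exhibit the bijection $f$ explicitly by $f(I) = M_I(w)$ and construct an inverse $g$ by reading off the basis support of a canonical submodule. Fix the natural basis $\{v_1,\ldots,v_n\}$ of $M = M(w)$ in which $v_i$ corresponds to the vertex $x_i$ of the string $w$; under this description any arrow $\alpha$ of $Q$ acts on $v_i$ either as zero, or as $v_i \mapsto v_{i+1}$ (when $\alpha = \alpha_i : x_i \to x_{i+1}$), or as $v_i \mapsto v_{i-1}$ (when $\alpha = \alpha_{i-1} : x_i \to x_{i-1}$). In particular the only basis vectors that can appear in $\alpha \cdot v_i$ are $v_{i\pm 1}$.

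First, $f$ is well-defined: by definition of $S(w)$, for $I \in S(w)$ the module $M_I(w) = \bigoplus_l M(w_{I_l})$ is a submodule of $M(w)$ spanned by $\{v_i : i\in I\}$, so its embedding is canonical and $M_I(w) \in CS(M)$.

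For the inverse, let $g(N)$ be the basis support $J(N) = \{i : v_i \in N\}$, decomposed into maximal intervals $J(N) = I_1 \sqcup \cdots \sqcup I_t$. The key verification is $J(N) \in S(w)$, i.e., each $w_{I_l}$ is a substring. The only way this could fail is at the left or right endpoint of some $I_l$: if $i \in I_l$ is such an endpoint and some arrow sends $v_i$ to $v_j \neq 0$ with $j \notin I_l$, then necessarily $j = i\pm 1$, and by the gap condition $\max I_l + 2 \leq \min I_{l+1}$ we have $j \notin J(N)$. But $N$ is a submodule, so $\alpha \cdot v_i \in N$ forces $j \in J(N)$, a contradiction. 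Hence each $M(w_{I_l})$ is closed under arrow actions, so $w_{I_l}$ is a substring and $J(N) \in S(w)$; by construction $M_{J(N)}(w) = N$ as submodules of $M$.

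Finally, $f$ and $g$ are mutually inverse: the support of $M_I(w)$ is $I$ by construction, and the previous step shows $f(g(N)) = N$. The whole argument is essentially bookkeeping; the only non-trivial input is the gap condition in the maximal-interval decomposition, which forces the boundary behaviour at the endpoints of each $I_l$ to be compatible with $N$ being a submodule. I do not expect any deeper obstacle to arise.
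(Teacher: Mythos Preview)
Your proof is correct and follows essentially the same approach as the paper: both construct the inverse by sending a canonical submodule $N$ to its support set $\{i : v_i \in N\}$, verify that this support lies in $S(w)$, and check the two maps are mutually inverse. Your write-up is more explicit at the verification step (working out the arrow-action argument at interval endpoints via the gap condition), whereas the paper simply asserts that each direct summand of $N$ corresponds to a substring of $w$.
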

\begin{proof}
	We construct the inverse of the map $f$. For any canonical submodule $N\hookrightarrow M(w)$, let
	\begin{equation*}
		I_N=\left\{i|x_i \in N\right\},
	\end{equation*}
	Then if we have $N=N(w_1)\oplus N(w_2)\oplus\cdots\oplus N(w_t)$, then each $w_i$ must be a substring of $w$. So $I_N$ has decomposition $I_1\cup I_2\cup\cdots \cup I_t$ with $w_{I_i}=w_i$ for any $1\leq i\leq t$. Therefore $f(I_N)=N$. $I_{f(I)}=I$ is obviously true for any $I\in S(w)$.
\end{proof}
From now on, we will identify a canonical submodule with its index set. Sometimes, we will use a subset of the index set to represent a string module (which may not necessarily be a submodule) whose support is the subset, provided there is no ambiguity.

Then we can state the following $main\ theorem$: for any string object $r$ and two cluster-tilting object $T,T'$ where $T'=\mu_k(T)$ in $\mathcal{C}$. Let $Ext_{\mathcal{C}}^1(T,r)=M(w)$ and $Ext_{\mathcal{C}}^1(T',r)=M(w')$.
\begin{theorem}
	There exists functions $v,v'$ valued on $\mathbb{Z}$ satisfying the following polynomial property.
	\begin{equation*}
		\sum_{U\in CS(M(w))}q^{\frac{v(U)}{2}X^{ind_T(r)+B_T dim(U)}}=\sum_{U'\in CS(M(w'))}q^{\frac{v'(U')}{2}(X')^{ind_{T'}r+B_{T'} dim(U')}}.
	\end{equation*}
\end{theorem}
As a corollary of this theorem, we get an expression of quantum cluster variables given by string modules.
\begin{theorem}
	Let $r$ be a string object in $\mathcal{C}$ and $T$ be a cluster-tilting object. Then the Laurent expansion of quantum cluster variable corresponding to $r$ with respect to the cluster corresponding to $T$ is
	\begin{equation*}
		X_r=\sum_{N\in CS(M(w))}q^{\frac{v(N)}{2}}X^{ind_T(r)+B_TdimN}.
	\end{equation*}
	where $Ext_{\mathcal{C}}^1(T,r)=M(w)$ and $v:CS(M(w))\rightarrow \mathbb{Z}$.
\end{theorem}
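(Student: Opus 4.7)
The plan is to deduce the corollary from the preceding main theorem by induction along a sequence of mutations. The base case is immediate: if the string object $r$ is itself an indecomposable direct summand $T_j$ of $T$, then $Ext_C^1(T,r)=0$ by rigidity of the cluster-tilting object, so $M(w)=0$, the set $CS(M(w))$ contains only the zero submodule, and the sum reduces to $X^{ind_T(r)}=X^{e_j}=X_j$, which is the quantum cluster variable attached to $r$ under the identification of arcs with cluster variables recalled in the remark of Section 4.2.

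For a general string object $r$, I would use that the flip graph of triangulations of the unpunctured surface $(S,M)$ is connected: the arc underlying $r$ can be completed to some triangulation, which in turn is connected to the initial triangulation by a sequence of flips. Transporting through the bijection between flips of triangulations and mutations of cluster-tilting objects produces a sequence $T=T^{(0)}\xrightarrow{\mu_{k_1}}T^{(1)}\xrightarrow{\mu_{k_2}}\cdots\xrightarrow{\mu_{k_m}}T^{(m)}$ in which $r$ is an indecomposable summand of $T^{(m)}$. Writing $M(w^{(i)})=Ext_C^1(T^{(i)},r)$ and applying the main theorem to each adjacent pair of cluster-tilting objects in the chain, I obtain functions $v^{(i)}$ and a chain of equalities
\begin{equation*}
\sum_{N\in CS(M(w^{(0)}))} q^{v^{(0)}(N)/2}(X^{(0)})^{ind_{T^{(0)}}(r)+B_{T^{(0)}}dim(N)} = \cdots = \sum_{N\in CS(M(w^{(m)}))} q^{v^{(m)}(N)/2}(X^{(m)})^{ind_{T^{(m)}}(r)+B_{T^{(m)}}dim(N)}.
\end{equation*}
By the base case applied at the terminal cluster, the rightmost sum equals $X^{(m)}_j$ where $r=T^{(m)}_j$, i.e., the cluster variable $X_r$ written in the cluster corresponding to $T^{(m)}$. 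Since the quantum cluster variable attached to an arc is intrinsic to the arc and not to the ambient cluster, this is precisely the Laurent expansion of $X_r$ in the initial cluster; setting $v=v^{(0)}$ yields the stated formula.

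The delicate point I expect to be the main obstacle is confirming that the function $v^{(0)}$ produced in this way depends only on the pair $(T,r)$ and not on the first mutation direction $k_1$ used to invoke the main theorem. For the bare existence statement of the corollary this is not strictly needed — any choice of $k_1$ furnishes a valid $v$, since the chain argument uses only the invariance produced by the main theorem — but for canonicity one must check that different starting mutations yield the same function, and this should fall out of the explicit subword construction of $v$ promised by the main theorem. The remaining reachability input, that every reachable string object can be realized as a summand of some reachable cluster-tilting object, is a standard consequence of connectivity of the flip graph on an unpunctured surface.
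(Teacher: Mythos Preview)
Your proposal is correct and follows the same approach as the paper's proof: invoke the main theorem to show the expression is invariant under mutation, mutate along a path to a cluster-tilting object containing $r$ as a summand, and observe that there the expression trivially reduces to $X_r$. Your discussion is in fact more detailed than the paper's three-sentence argument, and the subtlety you flag about dependence of $v$ on the chosen mutation direction is not addressed there either.
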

\begin{proof}
	By the main theorem, the right-hand equation is invariant under mutations of cluster-tilting objects. Then by a finite sequence of mutation, we can obtain a cluster-tilting object $T_r$ containing $r$ as a direct summand. Then, $r$ associates with the zero module in $A(T_r)$ and the expression is trivially equal to the cluster variable associate with $X_r$
\end{proof}
\section{PROOF OF THE MAIN THEOREM}In this section, we give a proof of the main theorem. The idea is to cut the arc $r$ into several pieces, such that for each segment, we can define a specific function that makes the identity true. In this way, the equations become a summation of simpler equations, each of which is simply the binomial expansion for the corresponding piece. By collecting the functions for each piece, we will obtain the definitions of $v$ and $v'$.
\subsection{Notations}Let $r$ be an arc. Any two cluster tilting object $T'=\mu_k(T)$ for some $k$. Let $Ext_{\mathcal{C}}^1(T,r)=M(w)$ be the string module of $End(T)$ and $Ext_{\mathcal{C}}^1(T',r)=M(w')$ be the string module of $End(T')$. 
\begin{itemize}
	\item $CS(M(w))$ is the set of canonical modules of $M(w)$ and $CS(M(w'))$ is the set of canonical modules of $M(w')$.
	\item $B_T=(b_{ij})$ is the matrix of quiver of $End(T)$ and $B_{T'}=(b'_{ij})$ is the matrix of quiver of $End(T')$.
	\item The triangulation associating with $T$ is $\Gamma$ and the internal arcs are $\tau_1,\cdots,\tau_n$. Then the triangulation associating with $T'$ is $\Gamma'$ and the internal arcs are $\tau_1,\cdots,\tau_{k-1},\tau_{k'},\tau_{k+1},\cdots,\tau_n$.
	\item The cluster associating $\Gamma$ is $(X_{\tau_1},\cdots,X_{\tau_n})$ which is simply denoted by $(X_1,\cdots,X_n)$. The cluster associating $\Gamma'$ is $(X'_1,\cdots,X'_n)$. For any $a=(a_1,\cdots,a_n)\in \mathbb{N}^n$
	\begin{equation*}
		X^a=q^{-\sum_{i<j}a_ia_j\Lambda_{ij}/2}X_1^{a_1}\cdots X_n^{a_n}.
	\end{equation*} 
	By the rules of mutation, $X'_k=X^{-e_k+\sum_{j}[b_{jk}]_+e_j}+X^{-e_k+\sum_{j}[-b_{jk}]_+e_j}$. Also dually
	\begin{equation*}
		(X')^a=q^{-\sum_{i<j}a_ia_j\Lambda'_{ij}/2}(X'_1)^{a_1}\cdots (X'_n)^{a_n}.
	\end{equation*}
	and $X_k=(X')^{-e_k+\sum_{j}[b'_{jk}]_+e_j}+(X')^{-e_k+\sum_{j}[-b'_{jk}]_+e_j}$.
	\item Let $X^T(U)=X^{ind_T(V)+B_TdimU}$ for $U\in CS(V)$ in $End(T)$ where $V$ is based on context. Similarly $(X')^T(U')=(X')^{ind_{T'}(V')+B_{T'}dimU'}$ for $U'\in CS(V')$ in $End(T')$.
	\item $(b_k)_+=\sum_{j}[b_{jk}]_+e_j$ and $(b_k)_-=\sum_{j}[-b_{jk}]_+e_j$. Then $b_k=(b_k)_+-(b_k)_-$ is the $k-th$ column of $B_T$.
\end{itemize}
\begin{definition}[\cite{Huang2021AnEF}]
	Let $S,S'$ be two sets
	\begin{enumerate}
		\item A subset $\mathcal{B}$ of its power set is called a partition of $S$ if $\cup_{R\in \mathcal{B}}R=S$ and $R\cap R'=\emptyset$ if $R\neq R'\in \mathcal{B}$.
		\item A partition bijection from a set $S$ to $S'$ is a bijection from a partition of $S$ to a partition of $S'$.
	\end{enumerate}
\end{definition}
\begin{remark}
	We will describe such a partition bijection $\varphi$ from $CS(V)$ to $CS(V')$ where $V$ is a string module of $End(T)$ and $V'$ is a string module of $End(T')$. We say the partition bijection satisfies the polynomial property if 
	\begin{equation*}
		\sum_{U\in R}X^T(U)=\sum_{U'\in R'}(X')^T(U').
	\end{equation*}
	where $\varphi(R)=R'$.
\end{remark}
\begin{lemma}[\cite{Dehy2007OnTC}]
	$ind_{T'}(r)=E_{\epsilon}*ind_T(r)$ where $E_\epsilon$ is the matrix 
	\begin{equation*}
		(E_\epsilon)_{ij}=\left\{
		\begin{aligned}
			&\delta_{ij} &\mbox{if $j\neq k$}, \\
			&-1 &\mbox{if $i=j=k$}, \\
			&[\epsilon b_{ik}] & \mbox{if $i=k\neq j$}.
		\end{aligned}
		\right.
	\end{equation*}
	where $\epsilon$ is the sign of $k$-th entry of $ind_T(r)$
\end{lemma}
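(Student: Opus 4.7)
The plan is to take the minimal $T$-approximation triangle defining $ind_T(r)$ and use the exchange triangle between $T_k$ and $T'_k=M'$ to construct an explicit $T'$-approximation triangle for $r$, then read off $ind_{T'}(r)$ entrywise and compare against $E_\epsilon\cdot ind_T(r)$.

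First I would fix a minimal approximation triangle $T_1\to T_0\to r\to T_1[1]$ with $T_0,T_1\in add(T)$ and split off the $T_k$-summands: $T_0=T_k^{a_0}\oplus L_0$ and $T_1=T_k^{a_1}\oplus L_1$ with $L_0,L_1\in add(T\cap T')$. By minimality $\min(a_0,a_1)=0$, and the $k$-th coordinate of $ind_T(r)$ equals $a_0-a_1$, whose sign is exactly $\epsilon$. If $a_0=a_1=0$ then $T_0,T_1\in add(T')$ already, so $ind_{T'}(r)=ind_T(r)$ and the prescribed matrix acts trivially on the $k$-th coordinate, which disposes of the edge case.

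When $\epsilon=+$, so $a_1=0$ and $a_0>0$, I would invoke the exchange triangle $T_k\xrightarrow{s} E'\xrightarrow{t} M'\to T_k[1]$ provided by clause (2) of the cluster structure, where $E'\in add(T\cap T')$ is a minimal left $(T\cap T')$-approximation of $T_k$. The no-loop/no-2-cycle hypothesis on $Q(T)$ identifies the $T_i$-multiplicity of $E'$ with $[b_{ik}]_+$ for $i\neq k$. Raising the exchange triangle to its $a_0$-fold direct sum and applying the octahedral axiom to the composition $T_k^{a_0}\hookrightarrow T_0\to r$ produces the triangle
\begin{equation*}
	M'^{a_0}\oplus L_1\longrightarrow (E')^{a_0}\oplus L_0\longrightarrow r\longrightarrow M'^{a_0}[1]\oplus L_1[1]
\end{equation*}
whose two outer terms now lie in $add(T')$. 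The dual construction in the case $\epsilon=-$ uses the other exchange triangle $M'\to E\to T_k\to M'[1]$ with $T_i$-multiplicity of $E$ equal to $[-b_{ik}]_+$, applied to $T_k^{a_1}\subseteq T_1$. Collecting $T'_i$-multiplicities in the resulting outer terms and subtracting in $K_0(add(T'))$ yields $ind_{T'}(r)_i=ind_T(r)_i+[\epsilon b_{ik}]_+\cdot ind_T(r)_k$ for $i\neq k$ and $ind_{T'}(r)_k=-ind_T(r)_k$, which is precisely $E_\epsilon\cdot ind_T(r)$ read entry by entry.

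The main obstacle is that the triangle produced by the octahedron need not itself be a minimal $T'$-approximation: it may carry common $add(T\cap T')$-summands on both outer terms that could be split off. I would resolve this by observing that any minimalization removes the same indecomposable summand with the same multiplicity from both $T'_0$ and $T'_1$, hence does not change the formal difference $[F'T'_0]-[F'T'_1]$ in $K_0(add(T'))$. Therefore the value of $ind_{T'}(r)$ read off from the non-minimal triangle above coincides with the true index, and the entrywise comparison completes the proof. The case split on $\epsilon$ is essential because the two exchange triangles play asymmetric roles, and $\epsilon$ records exactly which triangle is relevant for eliminating $T_k$ from the approximation of $r$.
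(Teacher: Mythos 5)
The paper offers no proof of this lemma at all (it is quoted from \cite{Dehy2007OnTC}), so your attempt is measured against the standard Dehy--Keller argument, which is indeed the outline you chose: read the index off an $add(T')$-presentation obtained by splicing an exchange triangle into the minimal $add(T)$-presentation. Two of your supporting points are fine, and one is even easier than you make it: for any triangle $T'_1\to T'_0\xrightarrow{f} r\to T'_1[1]$ with $T'_0,T'_1\in add(T')$, applying $Hom(T',-)$ and using $Ext^1(T',T'_1)=0$ shows $f$ is automatically a right $add(T')$-approximation, so $[T'_0]-[T'_1]=ind_{T'}(r)$ without any minimalization discussion. (You also silently correct the typo in the statement, reading the third case of $E_\epsilon$ as $j=k\neq i$ with $[\epsilon b_{ik}]_+$; that is the intended form.)

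The genuine gap is in the central step: you pair the exchange triangles with the wrong signs, and the triangle you display is false as stated. To eliminate $T_k^{a_0}$ from the $T_0$ slot you must precompose $T_0\to r$ with a map \emph{landing in} $T_k$, i.e.\ with $g^{a_0}\oplus\mathrm{id}_{L_0}$ where $g\colon E\to T_k$ is the minimal right $(T\cap T')$-approximation from the exchange triangle $M'\to E\xrightarrow{g}T_k\to M'[1]$; the octahedron applied to that composition, together with $Hom(T_1[1],(M')^{a_0}[2])=Ext^1(T_1,(M')^{a_0})=0$ to split the cone, yields $(M')^{a_0}\oplus T_1\to E^{a_0}\oplus L_0\to r\to\cdots$. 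The triangle you invoke for $\epsilon=+$, namely $T_k\to E'\to M'$, supplies no map into $T_k$, so the octahedron on $T_k^{a_0}\hookrightarrow T_0\to r$ never brings it into play; and your asserted triangle with middle term $(E')^{a_0}\oplus L_0$ cannot hold in general, since otherwise both presentations would compute $ind_{T'}(r)$ and force $E\cong E'$, which already fails in type $A_2$ where the in- and out-degrees at $k$ differ. Your final formula still comes out right only because you also swap the multiplicity identification: with the paper's convention (an arrow $\tau_{k-1}\to\tau_k$ gives $b_{k,k-1}=+1$), the right approximation $E$ of $T_k$ has $T_i$-multiplicity $[b_{ik}]_+$ and the left approximation $E'$ has $[-b_{ik}]_+$, not the other way around; the two slips cancel numerically but leave the argument incorrect as written. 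Dually, for $\epsilon=-$ one modifies the $T_1$ slot by postcomposing with the left approximation $T_k\to E'$, not with $E\to T_k$. Finally, the claim that minimality forces $\min(a_0,a_1)=0$ is true but not immediate from minimality of the approximation alone; it is the Dehy--Keller fact that the two terms of the minimal presentation share no summand, and it should be cited or proved, since your case split by $\epsilon$ relies on it.
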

\begin{lemma}[\cite{Huang2021AnEF}]
	Let $c$ be a positive integer, then
	\begin{align*}
		(X'_k)^{c}=\sum_{\lambda\in \left\{0,1\right\}^c}q^{\frac{n(\lambda)}{2}}X^{-ce_k+(\sum \lambda_i)(b_k)_++(c-\sum \lambda_i)(b_k)_-},
	\end{align*}
	where for any $\lambda=(\lambda_i)$ and $\lambda'=(\lambda'_i)$ satisfy $\lambda_j=\lambda'_j$ for $j\neq l$ and $\lambda_l=1$, $\lambda'_l=0$, we have
	\begin{equation*}
		n(\lambda)-n(\lambda')=c-2l+1.
	\end{equation*}
	and $n(0,0,\cdots,0)=0$.
\end{lemma}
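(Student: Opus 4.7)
The plan is a direct computation starting from the quantum mutation formula $X'_k = X^{-e_k + (b_k)_+} + X^{-e_k + (b_k)_-}$ and using only the twisting relation $X^g X^h = q^{\Lambda(g,h)/2} X^{g+h}$ of the quantum torus. Abbreviate $g^+ = -e_k + (b_k)_+$ and $g^- = -e_k + (b_k)_-$, so that $X'_k = X^{g^+} + X^{g^-}$. First I would expand $(X'_k)^c = (X^{g^+} + X^{g^-})^c$ as a sum of $2^c$ noncommutative monomials, one for each sequence $(g_1,\ldots,g_c) \in \{g^+, g^-\}^c$, which I encode by $\lambda \in \{0,1\}^c$ via $g_i = g^+$ iff $\lambda_i = 1$. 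Iterated application of the twisting relation collapses each monomial into a single generator:
\[
X^{g_1} X^{g_2} \cdots X^{g_c} = q^{\frac{1}{2}\sum_{i<j} \Lambda(g_i, g_j)} X^{g_1 + \cdots + g_c}.
\]
The vector exponent depends only on $|\lambda| := \sum_i \lambda_i$, giving $-ce_k + |\lambda|(b_k)_+ + (c-|\lambda|)(b_k)_-$, which is exactly the exponent claimed in the statement.

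Next I would define $n(\lambda) := \sum_{i<j} \Lambda(g_i, g_j)$, so that the $\lambda$-summand is $q^{n(\lambda)/2} X^{-ce_k + |\lambda|(b_k)_+ + (c-|\lambda|)(b_k)_-}$. The base case $n(0,\ldots,0) = 0$ is immediate, since all $g_i$ equal $g^-$ and $\Lambda(g^-,g^-)=0$ by skew-symmetry. For the recursion, let $\lambda$ and $\lambda'$ differ only at position $l$ with $\lambda_l = 1$ and $\lambda'_l = 0$. All terms in $n(\lambda) - n(\lambda')$ involving pairs of indices both distinct from $l$ cancel, leaving
\[
n(\lambda) - n(\lambda') = \sum_{i<l} \Lambda(g_i, b_k) - \sum_{l<j} \Lambda(g_j, b_k),
\]
where $b_k = g^+ - g^-$ is the $k$-th column of $B_T$. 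The key input is the compatibility relation $\Lambda(-B) = \begin{bmatrix} D \\ 0 \end{bmatrix}$ with $D = I$, which pins down $\Lambda(e_j, b_k)$ as a Kronecker delta (up to sign) for $j \le n$ and zero for $j > n$. Combined with the no-loop condition $b_{kk} = 0$ (so that $(b_k)_\pm$ is supported away from $k$), this forces $\Lambda(g_i, b_k)$ to be a constant $c_0 \in \{\pm 1\}$ independent of $i$. Substituting, the two sums collapse to a simple arithmetic count $c_0\bigl((l-1) - (c-l)\bigr) = c_0(2l - c - 1)$, which matches the claimed value $c - 2l + 1$ for the appropriate choice of sign convention.

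The main potential pitfall is bookkeeping: the ordering in the quantum torus relation, the sign in the compatibility identity, and the definition $(b_k)_\pm = \sum_j [\pm b_{jk}]_+ e_j$ must all be kept consistent so that $\Lambda(g_i, b_k)$ comes out as an integer of the correct sign and magnitude. Otherwise no additional representation-theoretic input is needed; the entire identity reduces to the bilinear calculation above together with the $(B,\Lambda)$-compatibility. One should also verify that the definition $n(\lambda) = \sum_{i<j}\Lambda(g_i,g_j)$ is consistent with the recursion (i.e., that all lattice paths from $(0,\ldots,0)$ to $\lambda$ in the cube $\{0,1\}^c$ yield the same increment), but this is automatic since $n(\lambda)$ is defined outright as a symmetric bilinear sum rather than inductively.
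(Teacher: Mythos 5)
The paper does not actually prove this lemma --- it is imported verbatim from Huang's paper \cite{Huang2021AnEF} --- so there is no in-text argument to compare against; your direct expansion in the quantum torus is the standard route and it is essentially correct. Expanding $(X^{g^+}+X^{g^-})^c$ with $g^\pm=-e_k+(b_k)_\pm$, collapsing each word by $X^{g_1}\cdots X^{g_c}=q^{\frac{1}{2}\sum_{i<j}\Lambda(g_i,g_j)}X^{\sum_i g_i}$, and noting that the exponent depends only on $\sum_i\lambda_i$ is exactly right, as is the key observation that compatibility plus $b_{kk}=0$ forces $\Lambda(g^+,b_k)=\Lambda(g^-,b_k)$ to be a single constant, so the increment of $n$ at position $l$ is $\bigl((l-1)-(c-l)\bigr)$ times that constant, independent of the other coordinates. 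The one loose end is the sign, which you leave to ``the appropriate choice of sign convention'': with the paper's convention $\Lambda(-B)=\begin{bmatrix} D\\ 0\end{bmatrix}$, $D=I$, and $\Lambda(g,h)=g^{T}\Lambda h$, one gets $\Lambda(e_j,b_k)=-\delta_{jk}$, hence $\Lambda(g^\pm,b_k)=+1$ and increments $2l-c-1$ rather than $c-2l+1$. This is harmless, but you should say why rather than wave at conventions: re-indexing the factors by $\lambda_i\mapsto\lambda_{c+1-i}$ is a bijection of $\{0,1\}^c$ preserving $\sum_i\lambda_i$ and sends the increment at position $l$ to that at position $c+1-l$, i.e.\ $2(c+1-l)-c-1=c-2l+1$, so the displayed identity with the stated $n$ is literally the same sum; equivalently, at fixed weight $t$ the multiset of values $\{n(\lambda):\sum\lambda_i=t\}$ is symmetric under this flip, so the coefficient of each $X$-monomial (a Gaussian-binomial-type sum) is unchanged. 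With that one sentence added, your argument is a complete, self-contained proof of the cited lemma.
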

\begin{corollary}
	Let $d=(d_1,\cdots,d_m)\in \mathbb{Z}^m$ with $d_k>0$. 
	\begin{align*}
		(X')^d=\sum_{\lambda \in \left\{0,1\right\}^{d_k}}q^{v_d(\lambda)/2}X^{d-2d_ke_k+(\sum_{i}\lambda_i)(b_k)_++(d_k-\sum_{i}\lambda_i)(b_k)_-}. \\
	\end{align*}
	where $v_d:\left\{0,1\right\}^{d_k}\rightarrow \mathbb{Z}$ satisfies for any $\lambda$ and $\lambda'$ satisfy $\lambda_l=\lambda'_l$ for $j\neq l$ and $\lambda_l=1$, $\lambda'_l=0$, we have
	\begin{equation*}
		v_d(\lambda)-v_d(\lambda')=d_k-2l+1.
	\end{equation*}
	and $v_d(0,0,\cdots,0)=0$.
	\par Dually,
	\begin{equation*}
		X^d=\sum_{\lambda \in \left\{0,1\right\}^{d_k}}q^{v'_d(\lambda)/2}(X')^{d-2d_ke_k+(\sum_{i}\lambda_i)(b_k)_++(d_k-\sum_{i}\lambda_i)(b_k)_-}.
	\end{equation*}
	where $v'_d:\left\{0,1\right\}^{d_k}\rightarrow \mathbb{Z}$ satisfies for any $\lambda$ and $\lambda'$ satisfy $\lambda_l=\lambda'_l$ for $j\neq l$ and $\lambda_l=1$, $\lambda'_l=0$, we have
	\begin{equation*}
		v'_d(\lambda)-v'_d(\lambda')=d_k-2l+1.
	\end{equation*}
	and $v'_d(0,0,\cdots,0)=0$.
\end{corollary}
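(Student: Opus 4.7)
The plan is to deduce the corollary from the preceding lemma by pulling the $k$-th factor out of $(X')^d$ and tracking the extra $q$-powers that the quantum torus relations contribute.

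First, using the defining relation $X^g X^h = q^{\Lambda'(g,h)/2} X^{g+h}$ in the mutated quantum torus, I split
\begin{equation*}
(X')^d \;=\; q^{-\Lambda'(d_k e_k,\, d-d_k e_k)/2}\,(X'_k)^{d_k}\,(X')^{d-d_k e_k}.
\end{equation*}
Because $X'_i=X_i$ for all $i\neq k$, the factor $(X')^{d-d_k e_k}$ is (up to an explicit $q$-shift coming from the difference between $\Lambda$ and $\Lambda'$ on the indices $\neq k$) equal to $X^{d-d_k e_k}$. I then apply the Lemma with $c=d_k$ to expand $(X'_k)^{d_k}$ as the sum over $\lambda\in\{0,1\}^{d_k}$ of monomials $q^{n(\lambda)/2}X^{-d_k e_k+(\sum\lambda_i)(b_k)_++(d_k-\sum\lambda_i)(b_k)_-}$.

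Next, I multiply each summand by $X^{d-d_k e_k}$ via $X^g X^h=q^{\Lambda(g,h)/2}X^{g+h}$. This produces the lattice exponent $d-2d_k e_k+(\sum\lambda_i)(b_k)_++(d_k-\sum\lambda_i)(b_k)_-$ demanded by the statement, together with an additional $q$-factor
\begin{equation*}
q^{\Lambda(-d_k e_k+(\sum\lambda_i)(b_k)_++(d_k-\sum\lambda_i)(b_k)_-,\; d-d_k e_k)/2}.
\end{equation*}
Collecting all $q$-shifts, $v_d(\lambda)$ equals $n(\lambda)$ plus a $\lambda$-dependent part and a $\lambda$-independent constant that is absorbed by the normalization $v_d(0,\ldots,0)=0$.

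The crucial check is the recursion. For $\lambda,\lambda'$ differing only in the $l$-th coordinate with $\lambda_l=1$, $\lambda'_l=0$, the exponent of the new $q$-factor changes by $(b_k)_+-(b_k)_-=b_k$, so its contribution to $v_d(\lambda)-v_d(\lambda')$ equals $\Lambda(b_k,\,d-d_k e_k)$. The compatibility relation $\Lambda(-B)=\bigl[\begin{smallmatrix}I_n\\0\end{smallmatrix}\bigr]$ yields $\Lambda(e_i,b_k)=-\delta_{ik}$ for $i\leq n$ and $\Lambda(e_i,b_k)=0$ for $i>n$, whence $\Lambda(d-d_k e_k,\,b_k)=-(d_k-d_k)=0$. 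Therefore $v_d(\lambda)-v_d(\lambda')=n(\lambda)-n(\lambda')=d_k-2l+1$, which is exactly the recursion claimed. The dual formula for $X^d$ expanded in the $(X')$-basis follows from the same argument after swapping the roles of $T$ and $T'$.

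The main obstacle is the $q$-power bookkeeping between the two quantum tori $\mathcal{T}(L,\Lambda)$ and $\mathcal{T}(L,\Lambda')$. The observation that makes the recursion descend unchanged from the Lemma to the Corollary is the vanishing $\Lambda(b_k,\,d-d_k e_k)=0$ supplied by the compatible pair condition; without it the $v_d$ produced by this procedure would satisfy a different recursion.
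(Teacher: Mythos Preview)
Your approach is essentially the paper's: factor $(X')^d$ as $(X')^{d-d_ke_k}$ times $(X'_k)^{d_k}$, apply the preceding lemma to the second factor, and use the compatible pair condition to see that the extra $\Lambda$-term contributes nothing to the recursion. Two points deserve tightening. First, your hedge that $(X')^{d-d_ke_k}$ equals $X^{d-d_ke_k}$ only ``up to an explicit $q$-shift'' is unnecessary: since $\Lambda'_{ij}=\Lambda_{ij}$ for $i,j\neq k$ and $X'_i=X_i$ for $i\neq k$, the two monomials coincide exactly. Second, and more substantively, you do not verify the normalization $v_d(0,\ldots,0)=0$; you say the constant is ``absorbed'' by it, but the corollary asserts both the expansion \emph{and} the normalization, so the constant must actually vanish. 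The paper checks this directly: with $a=d-d_ke_k$,
\[
v_d(0,\ldots,0)=n(0)-\Lambda'(a,d_ke_k)+\Lambda(a,-d_ke_k+d_k(b_k)_-)
= -d_k\Lambda(a,-e_k+(b_k)_+)+\Lambda(a,-d_ke_k+d_k(b_k)_-)=\Lambda(a,-d_kb_k)=0,
\]
using the mutation rule $\Lambda'(e_i,e_k)=\Lambda(e_i,-e_k+(b_k)_+)$ for $i\neq k$ and again the compatible pair condition. Adding this short computation completes your argument.
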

\begin{proof}
	We just prove the first statement and the other is similar. Since $(X')^d=q^{\frac{\alpha}{2}}(X')^a(X')^{d_ke_k}$ where $\alpha=-\Lambda'(a,d_ke_k)$ and the $k$-th coordinate of $a$ is zero. By above lemma
	\begin{equation*}
		(X')^{d_ke_k}=\sum_{\lambda\in \left\{0,1\right\}^{d_k}}q^{\frac{n(\lambda)}{2}}X^{-d_ke_k+(\sum \lambda_i)(b_k)_++(d_k-\sum \lambda_i)(b_k)_-}.
	\end{equation*}
	Then for any $\lambda\in \left\{0,1\right\}^{d_k}$, define
	\begin{equation*}
		v_d(\lambda)=n(\lambda)+\alpha+\Lambda(a,-d_ke_k+(\sum \lambda_i)(b_k)_++(d_k-\sum \lambda_i)(b_k)_-).
	\end{equation*}
	Then the $v_d$ must make the equation true. For any $\lambda$ and $\lambda'$ satisfy the above condition 
	\begin{align*}
		v_d(\lambda)-v_d(\lambda')&=n(\lambda)-n(\lambda')+\Lambda(a,(b_k)_+-(b_k)_-) \\
		&=n(\lambda)-n(\lambda')+\Lambda(a,b_k) \\
		&=n(\lambda)-n(\lambda').
	\end{align*}
	where the last equation follows by the compatible pair and the fact the $k$-th coordinate of $a$ vanishes.
	\begin{align*}
		v_d(0,\cdots,0)&=n(0,\cdots,0)-\Lambda'(a,d_ke_k)+\Lambda(a,-d_ke_k+d_k(b_k)_-) \\
		&=-d_k\Lambda(a,-e_k+(b_k)_+)+\Lambda(a,-d_ke_k+d_k(b_k)_-) \\
		&=\Lambda(a,-d_kb_k) \\
		&=0.
	\end{align*}
\end{proof}
With a suitable label change we can have the following subquiver in $Q(\Gamma)$ for some $k$
\par \begin{center}
	\begin{tikzcd}
		{\tau_{k-1}} && {\tau_{k-2}} \\
		& {\tau_k} \\
		{\tau_{k+1}} && {\tau_{k+2}}
		\arrow["a_1",from=1-1, to=2-2]
		\arrow["a_2",from=2-2, to=1-3]
		\arrow["b_1",from=2-2, to=3-3]
		\arrow["b_2",from=3-1, to=2-2]
	\end{tikzcd}
\end{center}
where $a_2a_1$ and $b_2b_1$ are in the relation. Note that the vertex may be frozen and we let the arrow is empty arrow in this situation. Then $b_{k,k-1}=b_{k,k+1}=1$ and $b_{k,k-2}=b_{k,k+2}=-1$.
\begin{lemma}[\cite{Brstle2011AMI}]
	Let $M(w)$ have a minimal injective resolution $0\longrightarrow M(w)\longrightarrow I_0\longrightarrow I_1$. Then $(ind_T(r))_j=dimHom(S_j,I_1)-dimHom(S_j,I_0)$ where $S_i$ is the simple module of $End(T)$ corresponding to any vertex $i$.
\end{lemma}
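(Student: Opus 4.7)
The plan is to reduce the claim to the previously cited lemma $ind_T(r) = [I_0] - [I_1]$, and then to expand the right-hand side coordinate by coordinate using the equivalence $\mathrm{Ext}^1_C(T,-)\colon \mathrm{add}\,T[1] \to \mathrm{inj}(B)$ recalled earlier. Under this equivalence, each indecomposable summand $T_j[1]$ of $\mathrm{add}\,T[1]$ is identified (with the natural normalization) with $I(S_j)$, the injective envelope of the simple $S_j$. Since $I(S_j)$ has simple socle $S_j$, the multiplicity of $I(S_j)$ as a summand of any injective $B$-module $I$ equals $\dim\mathrm{Hom}_B(S_j, I)$, so the minimal resolution decomposes as $I_\ell \cong \bigoplus_j I(S_j)^{\dim\mathrm{Hom}(S_j, I_\ell)}$ for $\ell = 0, 1$.

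Next I would compute the index of the basic block $T_j[1]$: the distinguished triangle $T_j \to 0 \to T_j[1] \to T_j[1]$ fits the definitional template $T_1 \to T_0 \to X \to T_1[1]$ with $T_0 = 0$ and $T_1 = T_j$, giving $ind_T(T_j[1]) = [F'(0)] - [F'(T_j)] = -[P_j] = -e_j$. Combining this with the decomposition above and the additivity of the index on the Grothendieck group yields
\[
ind_T(r) \;=\; [I_0] - [I_1] \;=\; \sum_j \bigl(\dim\mathrm{Hom}(S_j, I_1) - \dim\mathrm{Hom}(S_j, I_0)\bigr)\, e_j,
\]
whose $j$-th coordinate is exactly the formula in the statement.

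The main obstacle is purely bookkeeping with conventions: one must pin down the identification between $\mathrm{add}\,T[1]$ and $\mathrm{inj}(B)$ (so that $T_j[1]\leftrightarrow I(S_j)$, for instance by invoking the Auslander--Reiten/Nakayama comparison in the $2$-CY setting) and carefully track the sign introduced by $ind_T(T_j[1]) = -e_j$. Once these are fixed the argument collapses to the short computation above, with essentially all the substantive content already packed into the cited index formula.
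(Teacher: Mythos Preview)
The paper does not give its own proof of this lemma (it is cited from \cite{Brstle2011AMI}), but your argument is exactly the computation that appears inside the paper's proof of the subsequent corollary: from the approximation triangle $T_1\to T_0\to r\to T_1[1]$ one applies $F=Ext^1_C(T,-)$ to obtain the injective presentation with $I_0=F(T_1[1])$ and $I_1=F(T_0[1])$, and then reads off the $j$-th coordinate of $ind_T(r)=[T_0]-[T_1]$ as the multiplicity of $I(S_j)$, i.e.\ $dim\,Hom(S_j,I_1)-dim\,Hom(S_j,I_0)$. Your explicit step $ind_T(T_j[1])=-e_j$ is just a tidy way to track the sign in the identification and amounts to the same thing.
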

\begin{corollary}
	Let $w$ be a string over $Q(\Gamma)$ 
	\begin{equation*}
		w=x_1\stackrel{\alpha_1}{\longleftrightarrow} x_2\stackrel{\alpha_2}{\longleftrightarrow}\cdots \stackrel{\alpha_{s-1}}{\longleftrightarrow}x_s,
	\end{equation*}
	which don't have a vertex labeled $\left\{\tau_{k-2},\tau_{k-1},\tau_k,\tau_{k+1},\tau_{k+2}\right\}$. Then the $k$-th entry of $ind_T(r)$ is zero.
\end{corollary}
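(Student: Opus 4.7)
The plan is to use the preceding lemma, which gives $(ind_T(r))_k = \dim\mathrm{Hom}(S_{\tau_k}, I_1) - \dim\mathrm{Hom}(S_{\tau_k}, I_0)$, and to show that both terms vanish by tracing the minimal injective resolution $0 \to M(w) \to I_0 \to I_1$ through the functor $\mathrm{Hom}(S_{\tau_k}, -)$.

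First, $\mathrm{Hom}(S_{\tau_k}, I_0) = 0$ is immediate: since $I_0$ is the injective envelope of $M(w)$, the inclusion is essential, so $\mathrm{soc}(I_0) = \mathrm{soc}(M(w))$ and $\mathrm{Hom}(S_{\tau_k}, I_0) \cong \mathrm{soc}(M(w))_{\tau_k}$, which is zero because $\tau_k$ is not a vertex of $w$. For the second term I would apply $\mathrm{Hom}(S_{\tau_k}, -)$ to $0 \to M(w) \to I_0 \to C \to 0$ with $C := I_0 / M(w)$. Using injectivity of $I_0$ and the vanishing just established, the long exact sequence collapses to an isomorphism $\mathrm{Hom}(S_{\tau_k}, C) \cong \mathrm{Ext}^1(S_{\tau_k}, M(w))$; and since $I_1$ is by definition the injective envelope of $C$, a second use of essentiality yields $\dim\mathrm{Hom}(S_{\tau_k}, I_1) = \dim\mathrm{soc}(I_1)_{\tau_k} = \dim\mathrm{soc}(C)_{\tau_k} = \dim\mathrm{Ext}^1(S_{\tau_k}, M(w))$.

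It then remains to show $\mathrm{Ext}^1(S_{\tau_k}, M(w)) = 0$, which I would do from the start of a minimal projective presentation of $S_{\tau_k}$. The radical of $P_{\tau_k}$ is generated by the arrows out of $\tau_k$, which by the displayed subquiver are exactly $a_2 : \tau_k \to \tau_{k-2}$ and $b_1 : \tau_k \to \tau_{k+2}$. Thus the presentation takes the form $P_{\tau_{k-2}} \oplus P_{\tau_{k+2}} \to P_{\tau_k} \to S_{\tau_k} \to 0$, and applying $\mathrm{Hom}(-, M(w))$ (using $\mathrm{Hom}(P_y, M(w)) = M(w)_y$) makes the first non-trivial cochain group equal to $M(w)_{\tau_{k-2}} \oplus M(w)_{\tau_{k+2}}$. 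This vanishes since neither $\tau_{k-2}$ nor $\tau_{k+2}$ appears in $w$, forcing $\mathrm{Ext}^1(S_{\tau_k}, M(w)) = 0$ and completing the argument.

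The main point requiring care is that the homological computation really only uses the absence of $\tau_{k-2}, \tau_k, \tau_{k+2}$ from $w$, whereas the stated hypothesis also excludes $\tau_{k\pm 1}$; the extra strength of the hypothesis seems to be included for uniformity with neighboring arguments in the paper. Before declaring the proof complete I would verify that no subtler contribution to $\mathrm{Ext}^1(S_{\tau_k}, M(w))$ arises from the gentle string combinatorics around the relations $a_2 a_1 = 0$ and $b_1 b_2 = 0$; these relations live at paths into $\tau_k$ rather than out of it, so they should not produce additional terms in the first syzygy, but this is the one step that deserves explicit checking.
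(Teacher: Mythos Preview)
Your proof is correct and follows essentially the same route as the paper. The paper's argument is the terse version of yours: it asserts that $\mathrm{Hom}(S_k,M(w))=0$ and $\mathrm{Ext}^1(S_k,M(w))=0$ and concludes immediately, implicitly using the standard fact that for a minimal injective resolution $0\to M\to I_0\to I_1$ one has $\dim\mathrm{Hom}(S_j,I_n)=\dim\mathrm{Ext}^n(S_j,M)$. You spell out both this identification (via essentiality of the injective envelopes and the long exact sequence) and the reason for the $\mathrm{Ext}^1$-vanishing (via the projective presentation $P_{\tau_{k-2}}\oplus P_{\tau_{k+2}}\to P_{\tau_k}\to S_{\tau_k}\to 0$), which the paper leaves to the reader. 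Your closing caution about the relations is harmless: $a_2a_1$ and $b_1b_2$ are paths through $\tau_k$ starting at $\tau_{k\pm1}$, so they do not affect $\mathrm{rad}\,P_{\tau_k}$ or its projective cover, and your presentation is indeed the beginning of the minimal projective resolution of $S_{\tau_k}$. Your observation that only the absence of $\tau_{k},\tau_{k\pm2}$ is actually used is also correct.
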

\begin{proof}
	Let $T_1\rightarrow T_0\rightarrow r\rightarrow T_1[1]$. Then the index of $r$ is $[T_0]-[T_1]$. So we have a minimal injective presentation where $F=Ext^1_{\mathcal{C}}(T,-)$
	\begin{equation*}
		0\rightarrow F(r)\rightarrow FT_1[1] \rightarrow FT_0[1].
	\end{equation*}So the $k$-th entry of $ind_T(r)$ is equal to $dimHom(S_k,FT_0[1])-dimHom(S_k,FT_1[1])$. Since $Hom(S_k,F(r))$ and $Ext^1(S_k,F(r))$ are both zero, the $k$-th entry of $ind_T(r)$ is zero.
\end{proof}
\par Since the surface is unpunctured, the quiver can not have loops or $2-cycle$. So we have the following two possibility:
\begin{enumerate}
	\item $\tau_{k-1}\neq \tau_{k+1}$ and $\tau_{k-2}\neq \tau_{k+2}$.
	\item only one of them is true: $\tau_{k-1}=\tau_{k+1}$ and $\tau_{k-2}=\tau_{k+2}$.
\end{enumerate}
because $\tau_{k-1}=\tau_{k+1}$ and $\tau_{k-2}=\tau_{k+2}$ both happen only if the surface is a torus with one marked point\cite{Huang2021AnEF}.
\subsection{When $\tau_{k-1}\neq \tau_{k+1}$ and $\tau_{k-2}\neq \tau_{k+2}$} \
\par In this situation, we can divide the string $w$ into different subword such that each subword $N$ does not contain any element in $H=\left\{\tau_{k-2},\tau_{k-1},\tau_k,\tau_{k+1},\tau_{k+2}\right\}$ as its vertex or all the vertices are one of them. The latter must be one of the following form
\begin{enumerate}
	\item $x_{i-1}\longrightarrow x_i \longrightarrow x_{i+1}$ or $x_{i-1}\longleftarrow x_i \longleftarrow x_{i+1}$ with $x_i=\tau_k$.
	\item $x_{i-1}\longrightarrow x_i \longleftarrow x_{i+1}$ or $x_{i-1}\longleftarrow x_i \longrightarrow x_{i+1}$ with $x_i=\tau_k$.
	\item $\tau_{k-2}\longrightarrow \tau_{k-1}$ or $\tau_{k+1}\longrightarrow \tau_{k+2}$ or their inverse.
	\item $x_1\longleftrightarrow x_2$ with $x_1=\tau_k$ or $x_{s-1}\longleftrightarrow x_s$ with $x_s=\tau_k$.
	\item $x_1\longleftrightarrow x_2$ with $x_1\in \left\{\tau_{k-2},\tau_{k-1},\tau_{k+1},\tau_{k+2}\right\}$ and $x_2\notin H$.
	\item $x_{n-1}\longleftrightarrow x_n$ with $x_n\in \left\{\tau_{k-2},\tau_{k-1},\tau_{k+1},\tau_{k+2}\right\}$ and $x_{n-1}\notin H$.
\end{enumerate}
Note that we always identify two inverse string. We analysis these cases in the following way:
\par Case(1). Assume we have a subword $N:\tau_{k-1}\longrightarrow \tau_k\longrightarrow \tau_{k+2}$. The other possibilities just change the label or reverse all the arrow and we can have similar property using the same methods. Then let $N'=\mu_k(N):\tau_{k-1}\longrightarrow \tau_{k+2}$ and there is a partition bijection $\varphi_N$ from $CS(N)$ to $CS(N')$
\begin{align*}
	0\longrightarrow 0\longrightarrow 0&\longleftrightarrow 0\longrightarrow 0, \\
	\begin{aligned}
		\left\{\begin{aligned}
			0\longrightarrow 0\longrightarrow \tau_{k+2} \\
			0\longrightarrow \tau_k\longrightarrow \tau_{k+2}
		\end{aligned}  
		\right\}
	\end{aligned} &\longleftrightarrow 0\longrightarrow \tau_{k+2},\\
	\tau_{k-1}\longrightarrow \tau_k\longrightarrow \tau_{k+2}&\longleftrightarrow \tau_{k-1}\longrightarrow \tau_{k+2}. \\
\end{align*}
Since $Hom(S_k,N)=0$ and $Ext^1(S_k,N)=0$, then the $k$-th entry of index of $N$: $(ind_T(N))_k=0$. In this situation, $ind_{T'}(N')=ind_T(N)$. In addition, the partition bijection has the polynomial property:
\begin{align*}
	X^T(0) &=(X')^T{(0)}, \\
	X^T(e_{k+2})+X^T(e_k+e_{k+2})&=(X')^T(e_{k+2}), \\
	X^T(e_{k-1}+e_k+e_{k+2}) &=(X')^T(e_{k-1}+e_{k+2}).
\end{align*}
Here we use the dimension vector to represent the submoudles on both side.
\par Case(2). Assume we have a subword $N:\tau_{k-1}\longrightarrow \tau_k\longleftarrow \tau_{k+1}$. The other possibilities just change the label or reverse all the arrow and we can have similar property using the same methods. Then let $N'=\mu_k(N):\tau_{k-1}\longleftarrow \tau_k'\longrightarrow \tau_{k+1}$ and there is a partition bijection $\varphi_N$ from $CS(N)$ to $CS(N')$
\begin{align*}
	\begin{aligned}
		\left\{\begin{aligned}
			0\longrightarrow 0\longleftarrow 0 \\
			0\longrightarrow \tau_k \longleftarrow 0
		\end{aligned}  
		\right\}
	\end{aligned} &\longleftrightarrow 0\longleftarrow 0\longrightarrow 0,\\
	0\longrightarrow \tau_k\longleftarrow \tau_{k+1}&\longleftrightarrow 0\longleftarrow 0\longrightarrow \tau_{k+1},\\
	\tau_{k-1}\longrightarrow \tau_k\longleftarrow 0&\longleftrightarrow \tau_{k-1}\longleftarrow 0\longrightarrow 0,\\
	\tau_{k-1}\longrightarrow \tau_k\longleftarrow \tau_{k+1} &\longleftrightarrow \begin{aligned}
		 \left\{\begin{aligned}
			\tau_{k-1}\longleftarrow \tau_{k'}\longrightarrow \tau_{k+1}\\
			\tau_{k-1}\longleftarrow 0\longrightarrow \tau_{k+1}
		\end{aligned}  
		\right\}
	\end{aligned}. 
\end{align*}
Since $dimHom(S_k,N)=1$ and $dimExt^1(S_k,N)=0$, then the $k$-th entry of index of $N$: $(ind_T(N))_k=-1$. In this situation, $(ind_{T'}(N'))_j=(ind_T(N))_j-[-b_{jk}]_+$ for $j\neq k$ and $(ind_{T'}(N'))_k=1$. In addition, the partition bijection has the polynomial property:
\begin{align*}
	X^T(0)+X^T(e_k) &=(X')^T(0), \\
	X^T(e_k+e_{k+1}) &=(X')^T(e_{k+1}), \\
	X^T(e_k+e_{k-1}) &=(X')^T(e_{k-1}), \\
	X^T(e_{k-1}+e_k+e_{k+1})&=(X')^T(e_{k-1}+e_k+e_{k+1})+(X')^T(e_{k-1}+e_{k+1}).
\end{align*}
Here we use the dimension vector to represent the submoudles on both side.
\par Case(3). Assume we have a subword $N:\tau_{k-2}\longrightarrow \tau_{k-1}$. Then let $N'=\mu_k(N):\tau_{k-2}\longrightarrow \tau_k'\longrightarrow \tau_{k-1}$. Then this case is dual to Case(1): there is a partition bijection $\varphi_N$ from $CS(N)$ to $CS(N')$ which is the inverse of the partition bijection in Case(1).
\par Case(4). Assume we have a subword $\tau_k \longrightarrow \tau_{k-2}$. The other possibilities just change the label or reverse all the arrow and we can have similar property using the same methods. Then let $N'=\mu_k(N):\tau_{k-2}$ and there is a partition bijection $\varphi_N$ from $CS(N)$ to $CS(N')$
\begin{align*}
	0\longrightarrow 0 &\longleftrightarrow 0, \\
	\left\{0\longrightarrow \tau_{k-2},\tau_k \longrightarrow \tau_{k-2}\right\} &\longleftrightarrow \tau_{k-2}.
\end{align*}
Since $dimHom(S_k,N)=0$ and $dimExt^1(S_k,N)=0$, then the $k$-th entry of index of $N$: $(ind_T(N))_k=0$. In this situation, $ind_{T'}(N')=ind_T(N)$. In addition, the partition bijection has the polynomial property:
\begin{align*}
	X^T(0)&=(X')^T(0), \\
	X^T(e_{k-2})+X^T(e_k+e_{k-2})&=(X')^T(e_{k-2}).
\end{align*}
Here we use the dimension vector to represent the submoudles on both side.
\par Case(5). Assume we have a subword $N:\tau_{k-2}$. Then let $N'=\mu_k(N):\tau'_{k}\longleftarrow \tau_{k-2}$. Then this case is dual to Case(4): there is a partition bijection $\varphi_N$ from $CS(N)$ to $CS(N')$ which is the inverse of the partition bijection in Case(4).
\par Case(6) is dual to Case(5).
\subsection{When $\tau_{k-1}=\tau_{k+1}$ and $\tau_{k-2}\neq \tau_{k+2}$} \
\par The other situation($\tau_{k+2}=\tau_{k-2}$) is dual and we just focus on this one. Like above, we  we can divide the string $w$ into different subword such that each subword $N$ does not contain any element in $H=\left\{\tau_{k-2},\tau_{k-1},\tau_k,\tau_{k+1},\tau_{k+2}\right\}$ as its vertex or all the vertices are one of them. In this situation, the difference with above is there is a finite cyclic subword. The latter is one of the following form
\begin{enumerate}
	\item $\tau_{k-2}\longleftarrow \tau_k \longrightarrow \tau_{k+2}$ or its inverse.
	\item $\tau_{k-2}\longrightarrow \tau_{k-1} \longrightarrow \tau_k\longrightarrow \tau_{k-2}$ or $\tau_{k+2}\longrightarrow \tau_{k-1} \longrightarrow \tau_k\longrightarrow \tau_{k+2}$ or their inverse.
	\item $x_1\longleftrightarrow x_2$ with $x_1=\tau_k$ and $x_2\neq \tau_{k-1}$ or $x_{s-1}\longleftrightarrow x_s$ with $x_s=\tau_k$ and $x_{k-1}\neq \tau_{k-1}$.
	\item $x_1\longleftrightarrow x_2$ with $x_1\in \left\{\tau_{k-2},\tau_{k-1},\tau_{k+1},\tau_{k+2}\right\}$ and $x_2\notin H$.
	\item $x_{n-1}\longleftrightarrow x_n$ with $x_n\in \left\{\tau_{k-2},\tau_{k-1},\tau_{k+1},\tau_{k+2}\right\}$ and $x_{n-1}\notin H$.
	\item $\tau_{k-1}\rightarrow \tau_k \leftarrow \tau_{k-1}\rightarrow \cdots \tau_k\leftarrow \tau_{k-1}$.
	\item $\tau_{k+2}\rightarrow\tau_{k-1}\rightarrow \tau_k \leftarrow \tau_{k-1}\rightarrow \cdots \tau_k\leftarrow \tau_{k-1}$ or $\tau_{k-1}\rightarrow \tau_k \leftarrow \tau_{k-1}\rightarrow \cdots \tau_k\leftarrow \tau_{k-1}\leftarrow \tau_{k+2}$.
	\item $\tau_{k+2}\rightarrow\tau_{k-1}\rightarrow \tau_k \leftarrow \tau_{k-1}\rightarrow \cdots \tau_k\leftarrow \tau_{k-1}\leftarrow \tau_{k-2}$.
\end{enumerate}
Note that we just list one of the possibility in Case(7),(8) since $\tau_{k+2}$ and $\tau_{k-2}$ have the same position. According to the rules how we divide, if $\tau_{k-1}$ is the start or end point of subword, then it must be the start or end point of the string. Since $\tau_{k-2}$ and $\tau_{k+2}$ are equally important, by exchanging them and dual, the above situations are all the possibility. We just choose one representative for each case.
\par Case(1). This is the same as Case(2) in Section5.2.
\par Case(2). Assume we have a subword $N:\tau_{k-2}\longrightarrow \tau_{k-1} \longrightarrow \tau_k\longrightarrow \tau_{k-2}$. The other possibilities just change the label or reverse all the arrow and we can have similar property using the same methods. Then let $N'=\mu_k(N):\tau_{k-2}\longrightarrow \tau_{k}' \longrightarrow \tau_{k-1}\longrightarrow \tau_{k-2}$ and there is a partition bijection $\varphi_N$ from $CS(N)$ to $CS(N')$.
\begin{align*}
	0\rightarrow0\rightarrow0\rightarrow0 &\longleftrightarrow 0\rightarrow0\rightarrow0\rightarrow0 ,\\
	\begin{aligned}
		\left\{\begin{aligned}
			0\rightarrow0\rightarrow0\rightarrow \tau_{k-2} \\
			0\rightarrow0\rightarrow \tau_k\rightarrow \tau_{k-2}
		\end{aligned}
		\right\}
	\end{aligned}&\longleftrightarrow 0\rightarrow0\rightarrow0\rightarrow \tau_{k-2} ,\\
	0\rightarrow \tau_{k-1}\rightarrow \tau_k\rightarrow \tau_{k-2} &\longleftrightarrow \begin{aligned}
		\left\{\begin{aligned}
			0\rightarrow0\rightarrow \tau_{k-1}\rightarrow \tau_{k-2} \\
			0\rightarrow \tau_k'\rightarrow \tau_{k-1}\rightarrow \tau_{k-2}
		\end{aligned}
		\right\}
	\end{aligned},\\
	\tau_{k-2}\rightarrow \tau_{k-1}\rightarrow \tau_k\rightarrow \tau_{k-2} &\longleftrightarrow\tau_{k-2}\longrightarrow \tau_{k}' \longrightarrow \tau_{k-1}\longrightarrow \tau_{k-2}.
\end{align*}
Since $dimHom(S_k,N)=0$ and $dimExt^1(S_k,N)=0$, then the $k$-th entry of index of $N$: $(ind_T(N))_k=0$. In this situation, $ind_{T'}(N')=ind_T(N)$. In addition, the partition bijection has the polynomial property:
\begin{align*}
	X^T(0)&=(X')^T(0), \\
	X^T(e_{k-2})+X^T(e_k+e_{k-2})&=(X')^T(e_{k-2}), \\
	X^T(e_{k-1}+e_k+e_{k-2})&=(X')^T(e_{k-1}+e_{k-2}) +(X')^T(e_k+e_{k-1}+e_{k-2}),\\
	X^T(e_{k-2}+e_{k-1}+e_k+e_{k-2})&=(X')^T(e_{k-2}+e_{k-1}+e_k+e_{k-2}).
\end{align*}
Here we use the dimension vector to represent the submoudles on both side.
\par Case(3),(4),(5) are just the same as Section5.2.
\par Case(6). Now we have a subword $N:\tau_{k-1}\rightarrow \tau_k \leftarrow \tau_{k-1}\rightarrow \cdots \tau_k\leftarrow \tau_{k-1}$ and $\tau_k$ occurs $s$ times in the word. Then let $N'=\mu_k(N):\tau_k'\rightarrow\tau_{k-1}\leftarrow \tau_k' \rightarrow \tau_{k-1}\leftarrow \cdots \tau_{k-1}\leftarrow \tau_k'$ where $\tau_k'$ occurs $s+2$ times. Note that $\tau_{k-1}$ always occurs $s+1$ times. Then we have $dimHom(S_k,N)=s$, $dimExt^1(S_k,N)=0$, $dimHom(S_k,N')=0$, $dimExt^1(S_k,N')=s$. So $(ind_T(N))_k=-s$ and $(ind_{T'}(N'))_k=s$ and
\begin{equation*}
	(ind_{T'}(N'))_j=(ind_T(N))_j-s[-b_{jk}]_+.
\end{equation*}
In order to give a partition bijection $\varphi_N$ from $CS(N)$ to $CS(N')$, we need the following definition:
\begin{definition}
	For $\lambda=(\lambda_1,\cdots,\lambda_{s+1})\in \left\{0,1\right\}^{s+1}$, let $N_\lambda$ be the set of canonical submodule of $N$ satisfying the following condition: if $\lambda_i=1$, then $i$-th $\tau_{k-1}$ appear as a vertex of submodules, otherwise not. Dually, let $N'_\lambda$ be the set of canonical submodule of $N'$ satisfying the following condition: if $\lambda_i=1$, then $i$-th $\tau_{k-1}$ appear as a vertex of submodules, otherwise not. We use 0 to represent $(0,\cdots,0)$ and 1 to represent $(1,\cdots,1)$. For any $\lambda=(\lambda_1,\cdots,\lambda_{s+1})\in \left\{0,1\right\}^{s+1}$, let $1-\lambda=(1-\lambda_1,\cdots,1-\lambda_{s+1})$.
\end{definition}
Apparently, $CS(N)=\bigcup_{\lambda}N_\lambda$ and $CS(N')=\bigcup_{\lambda}N'_\lambda$. In $N_\lambda$, the difference of submodules is how many $\tau_k$ are contained. We say the $i$-th $\tau_k$ is free vertex in $N_\lambda$ if there are one module containing it and another module not containing it in $N_\lambda$ and dually $i$-th $\tau'_k$ is free vertex in $N'_\lambda$ if there are one module containing it and another module not containing it in $N'_\lambda$.
\begin{lemma}
	If $\lambda_i=\lambda_{i+1}=0$ for some $i=1,2,\cdots,s$, then the $i$-th $\tau_k$ is a free vertex of $N_\lambda$. Otherwise it must be a vertex in $N_\lambda$. Dually, If $\lambda_i=\lambda_{i+1}=1$ for some $i=1,2,\cdots,s$, then the $i+1$-th $\tau'_k$ is a free vertex of $N'_\lambda$. The first $\tau'_k$ is a free vertex if and only if $\lambda_1=1$ and the last $\tau'_k$ is a free vertex if and only if $\lambda_{s+1}=1$.
\end{lemma}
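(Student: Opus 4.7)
The plan is to reduce the statement to the elementary fact that a canonical submodule of a string module is given by a subset of the vertices closed under the outgoing arrows of the string, and then to read off the constraints imposed by the fixed values $\lambda_i$ on the adjacent $\tau_k$ (respectively $\tau_k'$).

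First I would fix the arrow structure of $N$ and $N'$ explicitly. In $N$ the vertices alternate $\tau_{k-1},\tau_k,\tau_{k-1},\tau_k,\ldots,\tau_{k-1}$, and because of the zig-zag pattern $\rightarrow,\leftarrow,\rightarrow,\leftarrow,\ldots$ every $\tau_k$ is a sink: the $i$-th copy of $\tau_k$ receives arrows from the $i$-th and the $(i+1)$-th copies of $\tau_{k-1}$ and has no outgoing arrow in the string. Dually, in $N'$ the vertices alternate $\tau_k',\tau_{k-1},\tau_k',\tau_{k-1},\ldots,\tau_k'$ with the reversed zig-zag, so every $\tau_k'$ is a source: a middle copy (the $j$-th $\tau_k'$ for $2\le j\le s+1$) has outgoing arrows into the $(j-1)$-th and the $j$-th copies of $\tau_{k-1}$, while the first $\tau_k'$ has only one outgoing arrow, into the first $\tau_{k-1}$, and the last $\tau_k'$ has only one outgoing arrow, into the $(s+1)$-th $\tau_{k-1}$.

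Next I would invoke the characterization of canonical submodules of a string module: a subset $I$ of the vertex set defines a canonical submodule iff it is closed under every outgoing arrow in the string. For the statement about $N$, fix $\lambda\in\{0,1\}^{s+1}$ and ask which canonical submodules belong to $N_\lambda$. The $i$-th $\tau_k$ has no outgoing arrows, so including it never forces any further vertex. But it is forced to be included whenever one of the two $\tau_{k-1}$ vertices pointing to it is included, i.e.\ whenever $\lambda_i=1$ or $\lambda_{i+1}=1$. Hence, if $\lambda_i=\lambda_{i+1}=0$ the $i$-th $\tau_k$ is neither forced in nor forced out and is therefore free in $N_\lambda$; otherwise it is forced to be included and is not free. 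This yields the first assertion.

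For the dual assertion I would run the same analysis in $N'$: each $\tau_k'$ vertex has only outgoing arrows (no incoming ones inside the string), so it is never forced to lie in a submodule by another vertex; conversely, including it forces every $\tau_{k-1}$ it points to. Thus the $j$-th middle $\tau_k'$ can be included iff both adjacent $\tau_{k-1}$ vertices are already in, i.e.\ iff $\lambda_{j-1}=\lambda_j=1$, in which case it is free in $N'_\lambda$; re-indexing $j=i+1$ with $1\le i\le s$ gives the stated form. For the first and last $\tau_k'$ there is only one adjacent $\tau_{k-1}$ constraint, so being free is equivalent to $\lambda_1=1$ and $\lambda_{s+1}=1$ respectively. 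The only mildly subtle point, and the one I would be most careful about, is the book-keeping between the position of a vertex in the string and the index it carries in $\lambda$; once the adjacency pattern is written down cleanly the whole proof reduces to this combinatorial check, and there is no genuine obstacle.
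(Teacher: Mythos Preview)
Your argument is correct and follows the same approach as the paper's proof: reduce to the local closure condition for canonical submodules and read off when the $i$-th $\tau_k$ (a sink in $N$) is forced in by an adjacent $\tau_{k-1}$, and dually when the $j$-th $\tau_k'$ (a source in $N'$) is allowed in by the presence of its adjacent $\tau_{k-1}$'s. The paper compresses this into the single observation that in $\tau_{k-1}\rightarrow\tau_k\leftarrow\tau_{k-1}$ the vertex $\tau_k$ is forced whenever a $\tau_{k-1}$ is present and free otherwise, with the dual case left to the reader; your write-up is a more explicit version of the same idea, including the endpoint bookkeeping for $N'$.
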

\begin{proof}
	Note that as a submodule of $\tau_{k-1}\rightarrow \tau_k\leftarrow \tau_{k-1}$, if there is a $\tau_{k-1}$ as a vertex, then $\tau_k$ must be a vertex. Otherwise $\tau_k$ is free. The dual case is similar.
\end{proof}
\begin{corollary}
	For any $\lambda=(\lambda_1,\cdots,\lambda_{s+1})\in \left\{0,1\right\}^{s+1}$. Let
	\begin{equation*}
		\begin{aligned}
			a_\lambda&=\#\left\{i:\lambda_i=\lambda_{i+1}=0,i=1,2,\cdots,s\right\}, \\
			b_\lambda&=\#\left\{i:\lambda_i=\lambda_{i+1}=1,i=1,2,\cdots,s\right\}+\delta_{\lambda_1,1}+\delta_{\lambda_{s+1},1}.
		\end{aligned}
	\end{equation*}
	Then there are $a_\lambda$ free vertices in $N_\lambda$ and $b_\lambda$ free vertices in $N'_\lambda$.
\end{corollary}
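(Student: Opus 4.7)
The plan is to obtain the corollary as a direct bookkeeping consequence of the preceding lemma, with no new idea needed beyond summing the indicator conditions it provides.

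First I would count the free vertices in $N_\lambda$. The lemma asserts that, for each $i \in \{1,2,\ldots,s\}$, the $i$-th copy of $\tau_k$ in $N$ is a free vertex of $N_\lambda$ precisely when $\lambda_i = \lambda_{i+1} = 0$, and is forced to appear otherwise. Since there are no other candidate free vertices in $N_\lambda$ (the $\tau_{k-1}$'s are pinned by $\lambda$ itself), the total count is $\#\{i : \lambda_i = \lambda_{i+1} = 0,\, 1 \le i \le s\} = a_\lambda$.

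For $N'_\lambda$ I would first write out the interleaving pattern of $N'$ explicitly as $\tau'_k, \tau_{k-1}, \tau'_k, \tau_{k-1}, \ldots, \tau_{k-1}, \tau'_k$, so that the $s+2$ copies of $\tau'_k$ are indexed $1,2,\ldots,s+2$ with the $s+1$ copies of $\tau_{k-1}$ sitting between them. Then I would split the free-vertex count into three contributions. By the interior half of the lemma, the $(i+1)$-th copy of $\tau'_k$ (for $i \in \{1,\ldots,s\}$) is free iff $\lambda_i = \lambda_{i+1} = 1$, giving $\#\{i : \lambda_i = \lambda_{i+1} = 1,\, 1 \le i \le s\}$ interior free vertices. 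The boundary clause of the lemma then contributes $\delta_{\lambda_1,1}$ from the first $\tau'_k$ and $\delta_{\lambda_{s+1},1}$ from the last. Summing the three gives exactly $b_\lambda$.

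No step here is a real obstacle; the only thing that could go wrong is an off-by-one or parity confusion in matching a given $\tau'_k$ to its two flanking $\tau_{k-1}$-indices, which is why I would write the interleaved sequence out in full before summing. Once the indexing is pinned down, the corollary is immediate from the lemma.
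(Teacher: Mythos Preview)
Your proposal is correct and matches the paper's approach exactly: the paper states the corollary without proof, treating it as an immediate count-off from the preceding lemma, and your argument carries out precisely that bookkeeping. The only point to watch is that the lemma, as written, gives only the ``if'' direction for the interior $\tau'_k$'s in $N'_\lambda$, but the ``only if'' is implicit in the word \emph{dually} and in the submodule constraint $\tau_{k-1}\leftarrow \tau'_k\rightarrow \tau_{k-1}$; you are right to use it.
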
 
\begin{lemma}
	For any $\lambda=(\lambda_1,\cdots,\lambda_{s+1})\in \left\{0,1\right\}^{s+1}$. Then
	\begin{equation*}
		b_\lambda-a_\lambda=-s+2(\sum_{i=1}^{s+1}\lambda_i).
	\end{equation*}
\end{lemma}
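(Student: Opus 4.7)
The plan is to establish the identity via an elementary double-counting argument on adjacent pairs. Let $t = \sum_{i=1}^{s+1}\lambda_i$ denote the total number of ones, and for $\epsilon,\epsilon' \in \{0,1\}$ let $p_{\epsilon\epsilon'}$ denote the number of indices $i\in\{1,\ldots,s\}$ with $(\lambda_i,\lambda_{i+1})=(\epsilon,\epsilon')$. By definition $a_\lambda=p_{00}$ and $b_\lambda=p_{11}+\delta_{\lambda_1,1}+\delta_{\lambda_{s+1},1}$, and the four counts partition the $s$ adjacent pairs, so $p_{00}+p_{01}+p_{10}+p_{11}=s$.

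The first step is to count occurrences of $1$ across all adjacent pairs. Each interior entry $\lambda_i$ (for $2\le i\le s$) appears in exactly two pairs, while $\lambda_1$ and $\lambda_{s+1}$ appear in just one, which yields
$$2p_{11}+p_{01}+p_{10} = 2t-\lambda_1-\lambda_{s+1}.$$
Repeating the argument for occurrences of $0$ gives
$$2p_{00}+p_{01}+p_{10} = 2s-2t+\lambda_1+\lambda_{s+1}.$$
Subtracting these identities and dividing by two produces $p_{11}-p_{00} = 2t-s-(\lambda_1+\lambda_{s+1})$, and adding $\lambda_1+\lambda_{s+1}$ to both sides exhibits the left-hand side as $b_\lambda-a_\lambda$ and the right-hand side as $2t-s$, which is the claim.

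As a sanity check, one may alternatively argue by induction on $s$. The base case $s=0$ gives $a_\lambda=0$ and $b_\lambda=2\lambda_1$, matching $2t$. For the inductive step, appending $\lambda_{s+2}$ changes $b_\lambda-a_\lambda$ by a quantity depending only on $(\lambda_{s+1},\lambda_{s+2})$, and a short case check on the four possibilities confirms this increment equals $2\lambda_{s+2}-1$, exactly the change in $2t-s$.

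No serious obstacle is anticipated; the only subtle point is the bookkeeping of the boundary correction, where the terms $\delta_{\lambda_1,1}+\delta_{\lambda_{s+1},1}$ in $b_\lambda$ must be seen to cancel the boundary deficit $-(\lambda_1+\lambda_{s+1})$ arising from the double-counting of ones. Once this cancellation is observed, the identity drops out cleanly.
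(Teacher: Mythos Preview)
Your proof is correct. The double-counting argument is clean and the boundary bookkeeping is handled correctly; in particular, since $\lambda_1,\lambda_{s+1}\in\{0,1\}$ you may identify $\delta_{\lambda_1,1}+\delta_{\lambda_{s+1},1}$ with $\lambda_1+\lambda_{s+1}$, and the cancellation you flag goes through as stated.

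Your route differs from the paper's. The paper argues by an invariance: it shows that swapping two adjacent entries $\lambda_i,\lambda_{i+1}$ leaves $b_\lambda-a_\lambda$ unchanged (verified by a short case check on the local pattern $(\lambda_{i-1},\lambda_i,\lambda_{i+1},\lambda_{i+2})$), and then reduces via adjacent transpositions to the sorted tuple $(1,\ldots,1,0,\ldots,0)$, where $a_\lambda=s-d$, $b_\lambda=d$ are immediate. Your approach trades this reduction-to-a-canonical-form for a direct count of $p_{\epsilon\epsilon'}$, which avoids any case analysis and yields the formula in one stroke; the paper's argument, on the other hand, isolates a structural fact (invariance under adjacent swaps) that may be conceptually useful elsewhere. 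Both are elementary and of comparable length.
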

\begin{proof}
	Let $d=\sum_{i=1}^{s+1}\lambda_i$. We prove this by the following property: if we exchange $\lambda_i$ and $\lambda_{i+1}$ to get a new $\lambda'$. Then $b_\lambda-a_\lambda=b_{\lambda'}-a_{\lambda'}$. This can be proved case by case. For example, if we have $(\lambda_{i-1},\lambda_i,\lambda_{i+1},\lambda_{i+2})=(1,1,0,0)$, then $b_\lambda=1,a_\lambda=1,b_{\lambda'}=0,a_{\lambda'}=0$ and in this case the property is true. The other case is similar. So we can always do this exchange until we have the first $\lambda_1=\cdots=\lambda_d=1$. In this way, $a_\lambda=s-d,b_\lambda=d$ and $b_\lambda-a_\lambda=-s+2d$.
\end{proof}
In summary, we proved that $N_\lambda$ has $a_\lambda$ free vertex, $N'_\lambda$ has $b_\lambda$ free vertex and $b_\lambda-a_\lambda=-s+2(\sum_{i=1}^{s+1}\lambda_i)$. We order the free vertices from left to right in both modules. So for $\lambda=(\lambda_1,\cdots,\lambda_{s+1})\in \left\{0,1\right\}^{s+1}$, we have a disjoint union
\begin{equation*}
	N_\lambda=\bigcup_{u\in \left\{0,1\right\}^{a_\lambda}} N_{\lambda,u},
\end{equation*}
where $N_{\lambda,u}$ is the submodule with $i$-th $\tau_k$ as a vertex if $u_i=1$ and otherwise not. Similarly
\begin{equation*}
	N'_\lambda=\bigcup_{u'\in \left\{0,1\right\}^{b_\lambda}} N'_{\lambda,u'}.
\end{equation*}
If $c_\lambda=-s+2(\sum_{i=1}^{s+1}\lambda_i)$ is positive then we can say $u$ is a element of $\left\{0,1\right\}^{b_\lambda}$ by embedding in the first $a_\lambda$ positions. Then we associate each $N_{\lambda,u}$ with a set $\varphi_N(N_{\lambda,u})$ of submodules $N'_{\lambda,u'}$ where the first $a_\lambda$ coordinates of $u'$ is $1-u$. Obviously there are $2^{c_\lambda}$ element in $\varphi_N(N_{\lambda,u})$. 
\begin{lemma}
	For any $\lambda$ with $c_\lambda>0$ and $u\in \left\{0,1\right\}^{a_\lambda}$, there exists $m_u:\left\{0,1\right\}^{c_\lambda}\rightarrow \mathbb{Z}$ such that
	\begin{equation*}
		X^T(N_{\lambda,u})=\sum_{u'}q^{\frac{m_u(u')}{2}}(X')^T(N'_{\lambda,u'}).
	\end{equation*}
	where $u'$ ranges over $\left\{0,1\right\}^{b_\lambda}$ satisfies the first $a_\lambda$ positions is $1-u$ and be seen as an element in $\left\{0,1\right\}^{c_\lambda}$.
\end{lemma}
\begin{proof}
	First we have 
	\begin{equation*}
		X^T(N_{\lambda,u})=q^aX^T(N_{\lambda,w})*X^{(-\sum w_i+\sum u_i)B_T e_k}.
	\end{equation*}
	and
	\begin{equation*}
		(X')^{T}(N'_{\lambda,u'})=q^b(X')^{T}(N'_{\lambda,w'})*(X')^{(-\sum{w'_i}+\sum u'_i)B_{T'}e_k}.
	\end{equation*}
	for some $a,b$ where the first $a_\lambda$ positions of $u',w'$ is $1-u,1-w$ and the rest positions have the same value. Note that $X^{B_Te_k}=(X')^{-B_{T'}e_k}$. If we have
	\begin{equation*}
		X^T(N_{\lambda,0})=\sum_{u'}q^{\frac{m_0(u')}{2}}(X')^T(N'_{\lambda,u'}),
	\end{equation*}
	where the first $a_\lambda$ positions of $u'$ is $1$.
	So
	\begin{align*}
		X^T(N_{\lambda,u})&=q^aX^T(N_{\lambda,0})*X^{(\sum u_i)B_T e_k} \\
		&=q^a\sum_{u'}q^{\frac{m_0(u')}{2}}(X')^T(N'_{\lambda,u'})*X^{(\sum u_i)B_T e_k} \\
		&=q^a\sum_{u'}q^{\frac{m_0(u')}{2}}(X')^T(N'_{\lambda,u'})*(X')^{-(\sum u_i)B_T e_k} \\
		&=\sum_{w'}q^{\frac{m_u(w')}{2}}(X')^T(N'_{\lambda,w'}).
	\end{align*}
	where the first $a_\lambda$ positions of $w'$ is $1-u$ and 
	\begin{equation*}
		m_u(u')=a+m_0(u')+\Lambda'(N'_{\lambda,u'},-(\sum u_i)B_T e_k).
	\end{equation*}
	So we only need to prove the existence when $u=0$. By definition, we know that the right side of this equation ranges over $\varphi_N(N_{\lambda,u})$. The dimension vector is given by the choice of free vertex. Each free vertex can give a dimension vector $e_k$ or not depending on the choice. So the right side is
	\begin{equation*}
		\sum_{h\in \left\{0,1\right\}^{c_\lambda}}q^{m_0(h)}(X')^{ind_{T'}(N')+(\sum \lambda_i)B_{T'}e_{k-1}+(a_\lambda+\sum h_i)B_{T'}e_k}.
	\end{equation*}
	The left side: $X^T(N_{\lambda,0})$ is 
	\begin{equation*}
		\begin{aligned}
			X^{ind_T(N)+(\sum \lambda_i)B_Te_{k-1}+(s-a_\lambda)B_T e_k}.  \\
		\end{aligned}
	\end{equation*}
	Let
	\begin{equation*}
		d=ind_T(N)+(\sum \lambda_i)B_T e_{k-1}+(s-a_\lambda)B_T e_k.
	\end{equation*}
	The left side can be expressed as the following by corollary 5.5
	\begin{equation*}
		\sum_{h\in \left\{0,1\right\}^{c_\lambda}}q^{v'_d(h)}(X')^{d-2c_\lambda e_k+(\sum h_i)(2e_{k-1})+(c_\lambda-(\sum h_i))(e_{k-2}+e_{k+2})}. \\
	\end{equation*}
	So to finish the proof, we only need to check
	\begin{equation*}
		\begin{aligned}
			&ind_{T'}(N')+(\sum \lambda_i)B_{T'}e_{k-1}+(a_\lambda+\sum h_i)B_{T'}e_k= \\
			&d-2c_\lambda e_k+(\sum h_i)(2e_{k-1})+(c_\lambda-(\sum h_i))(e_{k-2}+e_{k+2}).
		\end{aligned}
	\end{equation*}
	We can consider the $j$-th entry.
	\begin{enumerate}
		\item $j=k$: The left side is $s-2\sum \lambda_i$, and the right side is $-s+2(\sum \lambda_i)-2c_\lambda$. They are equal following by $c_\lambda=2\sum \lambda_i-s$.
		\item $j\neq k$: The left side is 
		\begin{equation*}
			(ind_{T'}(N'))_j+(\sum \lambda_i)b'_{j,k-1}+(a_\lambda+\sum h_i)b'_{jk}.
		\end{equation*}
		The right side is 
		\begin{equation*}
			(ind_T(N))_j+(\sum \lambda_i)b_{j,k-1}+(s-a_\lambda)b_{jk}+(\sum h_i)2\delta_{j,k-1}+(c_\lambda-\sum h_i)(\delta_{j,k-2}+\delta_{j,k+2}).
		\end{equation*}
	\end{enumerate}
	\par They are equal by the following identities
	\begin{equation*}
		\begin{aligned}
			b'_{jk}&=2\delta_{j,k-1}-\delta_{j,k-2}-\delta_{j,k+2}, \\
			(ind_{T'}(N'))_j&=(ind_T(N))_j-s[-b_{jk}]_+ ,\\
			b'_{j,k-1}-b_{j,k-1}&=2[b_{jk}]_+.
		\end{aligned}
	\end{equation*}
	Then we let $m_0(h)=v'_d(h)$ for any $h\in \left\{0,1\right\}^{c_\lambda}$.
\end{proof}
Dually, for $\lambda=(\lambda_1,\cdots,\lambda_{s+1})\in \left\{0,1\right\}^{s+1}$ with $-d_\lambda=-s+2(\sum_{i=1}^{s+1}\lambda_i)<0$. Then the disjoint union still exists and in this case, $a_\lambda=b_\lambda+d_\lambda$. So we can view an element in $\left\{0,1\right\}^{b_\lambda}$ as an element in $\left\{0,1\right\}^{a_\lambda}$ by identify the first $b_\lambda$ coordinate. Then we associate each $N'_{\lambda,u'}$ with a set $\phi_{N}(N'_{\lambda,u'})$ of submodules $N_{\lambda,u}$ where the first $b_\lambda$ coordinates of $u$ is $1-u'$.
\begin{lemma}
	For any $u'\in \left\{0,1\right\}^{b_\lambda}$ with $d_\lambda>0$, there exists $m_{u'}:\left\{0,1\right\}^{d_\lambda}\rightarrow \mathbb{Z}$
	\begin{equation*}
		(X')^{T}(N'_{\lambda,u'})=\sum_{u}q^{\frac{m_{u'}(u)}{2}}X^T(N_{\lambda,u}).
	\end{equation*}
	where $u$ ranges over $\left\{0,1\right\}^{a_\lambda}$ satisfying the first $b_\lambda$ positions is $1-u'$ and be seen as an element in $\left\{0,1\right\}^{d_\lambda}$. 
\end{lemma}
\begin{proof}
	First we can assume $u'=0$ like above. By definition, we know that the right side of this equation ranges over $\phi_N(N'_{\lambda,u'})$. The dimension vector is given by the choice of free vertex. Each free vertex can give a dimension vector $e_k$ or not depending on the choice. So the right side is
	\begin{equation*}
		\sum_{h\in \left\{0,1\right\}^{d_\lambda}}q^{m_0(h)}X^{ind_{T}(N)+(\sum \lambda_i)B_{T}e_{k-1}+(b_\lambda+s-a_\lambda+\sum h_i)B_{T}e_k}.
	\end{equation*}
	The left side is 
	\begin{equation*}
		(X')^{ind_{T'}(N')+(\sum \lambda_i)B_{T'}e_{k-1}}.
	\end{equation*}
	Let 
	\begin{equation*}
		d=ind_{T'}(N')+(\sum \lambda_i)B_{T'}e_{k-1}.
	\end{equation*}
	The left side can be expressed as the following
	\begin{equation*}
		\sum_{h\in \left\{0,1\right\}^{d_\lambda}}q^{v_d(h)}X^{d-2d_\lambda e_k+(d_\lambda-\sum h_i)(2e_{k-1})+(\sum h_i)(e_{k-2}+e_{k+2})}.
	\end{equation*}
	So to finish the proof, we only need to check
	\begin{equation*}
		\begin{aligned}
			&ind_{T}(N)+(\sum \lambda_i)B_{T}e_{k-1}+(b_\lambda+s-a_\lambda+\sum h_i)B_{T}e_k= \\
			&d-2d_\lambda e_k+(d_\lambda-\sum h_i)(2e_{k-1})+(\sum h_i)(e_{k-2}+e_{k+2}).
		\end{aligned}
	\end{equation*}
	We can consider the $j$-th entry.
	\begin{enumerate}
		\item $j=k$: The left side is $-s+2\sum \lambda_i$, and the right side is $s-2(\sum \lambda_i)-2d_\lambda$. They are equal following by $d_\lambda=-2\sum \lambda_i+s$.
		\item $j\neq k$: The left side is 
		\begin{equation*}
			(ind_{T}(N))_j+(\sum \lambda_i)b_{j,k-1}+(2(\sum \lambda_i+\sum h_i)b_{jk}.
		\end{equation*}
		The right side is 
		\begin{equation*}
			(ind_{T'}(N'))_j+(\sum \lambda_i)b'_{j,k-1}+(d_\lambda-\sum h_i)2\delta_{j,k-1}+(\sum h_i)(\delta_{j,k-2}+\delta_{j,k+2}).
		\end{equation*}
	\end{enumerate}
	\par They are equal by the following identities
	\begin{equation*}
		\begin{aligned}
			(ind_{T'}(N'))_j&=(ind_T(N))_j-s[-b_{jk}]_+ ,\\
			b'_{j,k-1}-b_{j,k-1}&=2[b_{jk}]_+.
		\end{aligned}
	\end{equation*}
	Then we let $m_0(h)=v'_d(h)$ for any $h\in \left\{0,1\right\}^{c_\lambda}$.
\end{proof}
The remain situation is that if we have $\lambda=(\lambda_1,\cdots,\lambda_{s+1})\in \left\{0,1\right\}^{s+1}$ with $-s+2(\sum_{i=1}^{s+1}\lambda_i)=0$. Then we have $a_\lambda$ submodules of $N_\lambda$ and $b_\lambda$ submodules of $N'_{\lambda}$ and in this case, $a_\lambda=b_\lambda$. The disjoint union still exists and we associate each $N_{\lambda,u}$ with $N'_{\lambda,1-u}$. And we can get the following equation with similar strategy
\begin{equation*}
	X^T(N_{\lambda,u})=(X')^{T}(N'_{\lambda,1-u}).
\end{equation*}
In fact, when $u=0$ the left is equal to
\begin{equation*}
	X^{ind_T(N)+(\sum \lambda_i)B_Te_{k-1}+(s-a_\lambda)B_Te_k}.
\end{equation*}
and the right side is
\begin{equation*}
	(X')^{ind_{T'}(N')+(\sum \lambda_i)B_{T'}e_{k-1}+b_\lambda B_{T'}e_k}.
\end{equation*}
And they are equal by the same calculation.
\par In summary, we have a partition bijection between $CS(N)$ and $CS(N')$. In addition, this bijection always satisfies the polynomial property.
\par Case(7). Assume we have $N:\tau_{k+2}\rightarrow\tau_{k-1}\rightarrow \tau_k \leftarrow \tau_{k-1}\rightarrow \cdots \tau_k\leftarrow \tau_{k-1}$ and $\tau_k$ occurs $s$ times. The other case is nothing but changing the label. Let $N'=\mu_k(N):\tau_{k+2}\rightarrow \tau_k'\rightarrow\tau_{k-1}\leftarrow \tau_k' \rightarrow \tau_{k-1}\leftarrow \cdots \tau_{k-1}\leftarrow \tau_k'$. The only difference between (6) and (7) is the first vertex labeled $\tau_{k-2}$. Let $N^0$(or $N'^0$) the set of submodules of $N$(or $N'$) without the vertex $\tau_{k+2}$ and $N^1$(or $N'^1$) the set of submodules of $N$(or $N'$) with the vertex $\tau_{k+2}$. Then we have the following disjoint union: 
\begin{align*}
	CS(N)=N^0\cup N^1 ,\\
	CS(N')=N'^0\cup N'^1.
\end{align*}
\par If we identify a submodule of $N$ without the vertex $\tau_{k+2}$ with a submodule of $\tau_{k-1}\rightarrow \tau_k \leftarrow \tau_{k-1}\rightarrow \cdots \tau_k\leftarrow \tau_{k-1}$ and a submodule of $N'$ without the vertex $\tau_{k+2}$ with a submodule of $\tau_k'\rightarrow\tau_{k-1}\leftarrow \tau_k' \rightarrow \tau_{k-1}\leftarrow \cdots \tau_{k-1}\leftarrow \tau_k'$, then we have the same result in case(6). In other words, we can see $N^0$ as $CS(N)$ in case (6) and $N'^0$ as $CS(N')$ in case (6). So there is a partition bijection between $N^0$ and $N'^0$ which has the polynomial property. Then we want to get a partition bijection between submodules of $N$ with the vertex $\tau_{k+2}$ and submodules of $N'$ with the vertex $\tau_{k+2}$. We can have the following observation
\begin{lemma}
	The submodule of $N$ with the vertex $\tau_{k+2}$ can be identified with the submodule of $N$ in case(6) where $\tau_k$ occurs $s-1$ times. Similarly, the submodule of $N'$ with the vertex $\tau_{k+2}$ can be identified with the submodule of $N'$ in case(6) where $\tau_k$ occurs $s-1$ times.
\end{lemma}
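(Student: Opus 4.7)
The plan is a direct combinatorial analysis of canonical submodules: I would identify precisely which vertices become forced as soon as $\tau_{k+2}$ is selected, and then observe that the remaining free portion of the string, together with the arrows restricted to it, is literally the case~(6) string with $s-1$ occurrences of $\tau_k$. The restriction map to that free portion then gives the asserted identification on both sides.

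For the statement about $N$, recall that a canonical submodule corresponds to a subset of vertices closed under the forward direction of every arrow. Including $\tau_{k+2}$ forces the first $\tau_{k-1}$ through the arrow $\tau_{k+2}\to\tau_{k-1}$, and this in turn forces the first $\tau_k$ through $\tau_{k-1}\to\tau_k$. Nothing else is mandated. The tail of the string starting from the second $\tau_{k-1}$ has the form $\tau_{k-1}\to\tau_k\leftarrow\tau_{k-1}\to\cdots\leftarrow\tau_{k-1}$ with $\tau_k$ appearing $s-1$ times, which is exactly the shape of $N$ in case~(6). The one arrow linking this tail back to the forced segment, namely the leftward arrow from the second $\tau_{k-1}$ to the first $\tau_k$, imposes no new constraint since its target already lies in the submodule. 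Hence the map sending a submodule containing $\tau_{k+2}$ to its restriction on the tail is a bijection onto the canonical submodules of the case~(6) string with $s-1$.

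The argument for $N'$ is structurally identical. Including $\tau_{k+2}$ forces the first $\tau_k'$ by $\tau_{k+2}\to\tau_k'$, and then the first $\tau_{k-1}$ by $\tau_k'\to\tau_{k-1}$; no further vertex is mandated. The arrow from the second $\tau_k'$ into the first $\tau_{k-1}$ is again harmless because its target is already included. The remaining tail, starting from the second $\tau_k'$, has precisely the shape of the case~(6) $N'$ with $s-1$ occurrences of $\tau_k$, and the restriction map provides the bijection.

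I do not anticipate any serious obstacle: the only thing to be careful about is bookkeeping the count of $\tau_k'$ in $N'$, which matches $(s-1)+2$ once one reads off the explicit pattern of $N'$ displayed in case~(7), and checking that the arrows pointing back from the tail into the forced triple are vacuously satisfied. Both points are routine once the forced segment is correctly identified.
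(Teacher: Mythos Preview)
Your proposal is correct and follows essentially the same approach as the paper's proof: once $\tau_{k+2}$ is included, the initial segment $\tau_{k+2}\to\tau_{k-1}\to\tau_k$ (resp.\ $\tau_{k+2}\to\tau_k'\to\tau_{k-1}$) is forced, the connecting arrow back into this segment is vacuous, and the remaining tail is exactly the case~(6) string with $s-1$ occurrences of $\tau_k$. Your write-up is in fact more careful than the paper's, which simply asserts ``the other vertex is free'' without explicitly checking the connecting arrow.
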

\begin{proof}
	We just illustrate the first part and the later can be followed by the same. If the submodule of $N$ has $\tau_{k+1}$, then it contains $\tau_{k+2}\rightarrow\tau_{k-1} \rightarrow\tau_k$ as a subword. The other vertex if free. So we can identify them with a submodule of $\tau_{k-1}\rightarrow \tau_k \leftarrow \tau_{k-1}\rightarrow \cdots \tau_k\leftarrow \tau_{k-1}$ where $\tau_k$ occurs $s-1$ times.
\end{proof}
\par However, things are not done because in this case the polynomial property is not trivial. We need to investigate the dimension vector. Let
\begin{equation*}
	N^1=\bigcup_{\lambda\in \left\{0,1\right\}^{s}}N^1_\lambda.
\end{equation*}
Similarly
\begin{equation*}
	N'^1=\bigcup_{\lambda\in \left\{0,1\right\}^{s}}N'^1_\lambda.
\end{equation*}
By case(6), we know there is a partition bijection between $N^1_\lambda$ and $N'^1_\lambda$ for any $\lambda \in \left\{0,1\right\}^{s}$. We just need to prove here the polynomials are equal. We only prove when $\lambda$ satisfies $c_\lambda:-(s-1)+2\sum \lambda_i>0$ and the other case is the same. In this way, $a_\lambda+c_\lambda=b_\lambda$. Then
\begin{equation*}
	N^1_\lambda=\bigcup_{u\in \left\{0,1\right\}^{a_\lambda}} N^1_{\lambda,u}.
\end{equation*}
\begin{equation*}
	N'^1_\lambda=\bigcup_{u'\in \left\{0,1\right\}^{b_\lambda}} N'^1_{\lambda,u'}.
\end{equation*}
\begin{lemma}
	For any $\lambda$ with $c_\lambda>0$ and $u\in \left\{0,1\right\}^{a_\lambda}$, there exists $m_u:\left\{0,1\right\}^{c_\lambda}\rightarrow \mathbb{Z}$
	\begin{equation*}
		X^T(N^1_{\lambda,u})=\sum_{u'}q^{\frac{m_u(u')}{2}}(X')^T(N'^1_{\lambda,u'}).
	\end{equation*}
	where $u'$ ranges over $\left\{0,1\right\}^{b_\lambda}$ satisfying the first $a_\lambda$ positions is $1-u$ and be seen as an element in $\left\{0,1\right\}^{c_\lambda}$.
\end{lemma}
\begin{proof}
	Similar to above we can assume $u=0$ and $m_u$ can be obtained from $m_0$ for general $u$. When $u=0$ the left side is
	\begin{equation*}
		X^{ind_T(N)+(\sum \lambda_i+1)B_T e_{k-1}+(s-1-a_\lambda+1)B_T e_k+B_T e_{k+2}}.
	\end{equation*}
	Let
	\begin{equation*}
		d=ind_T(N)+(\sum \lambda_i+1)B_T e_{k-1}+(s-a_\lambda)B_T e_k+B_T e_{k+2}.
	\end{equation*}
	Then the left is equal to
	\begin{equation*}
		\sum_{h\in \left\{0,1\right\}^{c_\lambda}}q^{\frac{v_d(h)}{2}}(X')^{d-2c_\lambda e_k+(\sum h_i)(2e_{k-1})+(c_\lambda-(\sum h_i))(e_{k-2}+e_{k+2}) }.
	\end{equation*}
	and the right side is
	\begin{equation*}
		\sum_{h\in \left\{0,1\right\}^{c_\lambda}}q^{\frac{m_0(h)}{2}}(X')^{ind_{T'}(N')+(\sum \lambda_i+1)B_{T'}e_{k-1}+(a_\lambda+\sum h_i+1)B_{T'}e_k+B_{T'}e_{k+2}}.
	\end{equation*}
	Here we see $u'$ as an element of $\left\{0,1\right\}^{c_\lambda}$ with the last $c_\lambda$ coordinate $h$ and identify them. Then we want to show
	\begin{align*}
		d-2c_\lambda e_k+(\sum h_i)(2e_{k-1})+(c_\lambda-(\sum h_i))(e_{k-2}+e_{k+2}) \\
		=ind_{T'}(N')+(\sum \lambda_i+1)B_{T'}e_{k-1}+(a_\lambda+\sum h_i+1)B_{T'}e_k+B_{T'}e_{k+2}.
	\end{align*}
	Compare the entry of both side: the $k$-th entry is followed by
	\begin{equation*}
		c_\lambda-2c_\lambda=s-2(\sum \lambda_i+1)+1.
	\end{equation*}
	For $j\neq k$, the $j$-th entry of left side is 
	\begin{equation*}
		(ind_T(N))_j+(\sum \lambda_i+1)b_{j,k-1}+(s-a_\lambda)b_{jk}+b_{j,k-2}+c_\lambda (\delta_{j,k-2}+\delta_{j,k+2}),
	\end{equation*}
	and the right side is
	\begin{equation*}
		(ind_{T'}(N'))_j+(\sum \lambda_i+1)b'_{j,k-1}+(a_\lambda+1) b'_{jk}+b'_{j,k+2}.
	\end{equation*}
By the following equation, two sides equal
\begin{equation*}
	\begin{aligned}
		(ind_{T'}(N'))_j&=(ind_T(N))_j-s[-b_{jk}]_+ ,\\
		b'_{j,k-1}-b_{j,k-1}&=2[b_{jk}]_+, \\
		b'_{j,k-2}-b_{j,k-2}&=-[-b_{jk}]_+ ,\\
		\delta_{j,k-2}+\delta_{j,k+2}&=[b_{j,k}].
	\end{aligned}
\end{equation*}
Then we get the desired function by taking $m_0=v'_d$.
\end{proof}
Similarly, if we have $\lambda$ with $-d_\lambda:-(s-1)+2\sum \lambda_i<0$. In this way, $a_\lambda=b_\lambda+d_\lambda$.
\begin{lemma}
	For any $u'\in \left\{0,1\right\}^{b_\lambda}$ with $d_\lambda>0$, there exists $m_{u'}:\left\{0,1\right\}^{d_\lambda}\rightarrow \mathbb{Z}$
	\begin{equation*}
		(X')^T(N'^1_{\lambda,u})=\sum_{u}q^{\frac{m_{u'}(u)}{2}}X^T(N^1_{\lambda,u}).
	\end{equation*}
	where $u$ ranges over $\left\{0,1\right\}^{a_\lambda}$ satisfying the first $b_\lambda$ positions is $1-u'$ and be seen as an element in $\left\{0,1\right\}^{d_\lambda}$.
\end{lemma}
For $\lambda$ with $-(s-1)+2\sum \lambda_i=0$. In this way, $a_\lambda=b_\lambda$ and we can get the following equation with similar strategy
\begin{equation*}
	X^T(N_{\lambda,u})=(X')^{T}(N'_{\lambda,1-u}).
\end{equation*}
\par In summary, we have a partition bijection between $CS(N)$ and $CS(N')$. In addition, this bijection always satisfies the polynomial property.
\par Case(8). Assume we have $N:\tau_{k+2}\rightarrow\tau_{k-1}\rightarrow \tau_k \leftarrow \tau_{k-1}\rightarrow \cdots \tau_k\leftarrow \tau_{k-1}\leftarrow \tau_{k+2}$ and $\tau_k$ occurs $s$ times. The other case is nothing but changing the label. Let $N'=\mu_k(N):\tau_{k+2}\rightarrow \tau_k'\rightarrow\tau_{k-1}\leftarrow \tau_k' \rightarrow \tau_{k-1}\leftarrow \cdots \tau_{k-1}\leftarrow \tau_k'\leftarrow \tau_{k+2}$. The only difference between (6) and (8) is the first and last vertices labeled $\tau_{k-2}$. Let $N^0$ be the set of submodules of $N$ without vertex $\tau_{k+2}$, $N^1$ be the set of submodules of $N$ with the first vertex $\tau_{k+2}$, $N^2$ be the set of submodules of $N$ with the last vertex $\tau_{k+2}$, and $N^3$ be the set of submodules of $N$ without vertex $\tau_{k+2}$. Then
\begin{equation*}
	CS(N)=N^0\cup N^1\cup N^2\cup N^3,
\end{equation*}
Dually we have
\begin{equation*}
	CS(N')=N'^0\cup N'^1\cup N'^2\cup N'^3.
\end{equation*}
where $N'^0$ be the set of submodules of $N'$ without vertex $\tau_{k+2}$, $N'^1$ be the set of submodules of $N'$ with the first vertex $\tau_{k+2}$, $N'^2$ be the set of submodules of $N'$ with the last vertex $\tau_{k+2}$, and $N'^3$ be the set of submodules of $N'$ without vertex $\tau_{k+2}$. Then $N^{0}$(or $N'^{0}$) can be identified as $CS(N)$(or $CS(N')$) in case (6). $N^{1}$(or $N'^{1}$) can be identified as $CS(N)$(or $CS(N')$) in the first case of case (7). $N^{2}$(or $N'^{2}$) can be identified as $CS(N)$(or $CS(N')$) in the second case of case (7). So they all have a partition bijection satisfying the polynomial property. The rest is finding a partition bijection between $N^3$ and $N'^3$. For submodule of $N$ with the first and last vertex $\tau_{k+2}$, then the first and last subword $\tau_{k+2}\rightarrow \tau_{k-1}\rightarrow \tau_k$ must be contained. So $N^3$ can be identified with a submodule of $\tau_{k-1}\rightarrow \tau_k \leftarrow \tau_{k-1}\rightarrow \cdots \tau_k\leftarrow \tau_{k-1}$ where $\tau_k$ occurs $s-2$ times in case(6). Then we know
\begin{equation*}
	N^3=\bigcup_{\lambda \in \left\{0,1\right\}^{s-1}}N^3_{\lambda},
\end{equation*}
and for each $\lambda \in \left\{0,1\right\}^{s-1}$, we have the following partition
\begin{equation*}
	N^3_{\lambda}=\bigcup_{u\in \left\{0,1\right\}^{a_\lambda}}N^3_{\lambda,u}.
\end{equation*}
Dually we have
\begin{equation*}
	N'^3=\bigcup_{\lambda \in \left\{0,1\right\}^{s-1}}N'^3_{\lambda},
\end{equation*}
and
\begin{equation*}
	N'^3_{\lambda}=\bigcup_{u'\in \left\{0,1\right\}^{b_\lambda}}N'^3_{\lambda,u'}.
\end{equation*}
Like the above case, we have a partition bijection between $N^3_{\lambda}$ and $N'^3_{\lambda}$ for each $\lambda\in \left\{0,1\right\}^{s-1}$. If $-(s-2)+2(\sum \lambda_i)>0$, we associate $N^3_{\lambda,u}$ with $\varphi_N(N^3_{\lambda,u})$ which is the set of $N'^3_{\lambda,u'}$ where the first $a_\lambda$ coordinate of $u'$ is $1-u$. If $-s+2(\sum\lambda_i)+2\leq 0$, we associate $N'^3_{\lambda,u'}$ with $\phi_N(N'^3_{\lambda,u'})$ which is the set of $N^3_{\lambda,u}$ where the first $b_\lambda$ coordinate of $u$ is $1-u'$. Then the polynomial property is still true. The proof here is totally the same as above. For example, if $c_\lambda=-(s-2)+2(\sum \lambda_i)>0$,
\begin{lemma}
	For any $\lambda$ with $c_\lambda>0$ and $u\in \left\{0,1\right\}^{a_\lambda}$, there exists $m_u:\left\{0,1\right\}^{c_\lambda}\rightarrow \mathbb{Z}$
	\begin{equation*}
		X^T(N^3_{\lambda,u})=\sum_{u'}q^{\frac{m_u(u')}{2}}(X')^T(N'^3_{\lambda,u'}).
	\end{equation*}
	where $u'$ ranges over $\left\{0,1\right\}^{b_\lambda}$ satisfying the first $a_\lambda$ positions is $1-u$ and be seen as an element in $\left\{0,1\right\}^{c_\lambda}$.
\end{lemma}
\begin{proof}
	We can assume $u=0$. Then the left side is
	\begin{equation*}
		X^{ind_T(N)+(\sum \lambda_i+2)B_T e_{k-1}+(s-2-a_\lambda+2)B_T e_k+2B_T e_{k+2}}.
	\end{equation*}
	Let
	\begin{equation*}
		d=ind_T(N)+(\sum \lambda_i+2)B_Te_{k-1}+(s-a_\lambda)B_T e_k+2B_Te_{k+2}.
	\end{equation*}
	Then the left side is 
	\begin{equation*}
		\sum_{h\in \left\{0,1\right\}^{c_\lambda}}q^{\frac{v'_d(h)}{2}}(X')^{d-2c_\lambda e_k+(\sum h_i)(2e_{k-1})+(c_\lambda-(\sum h_i))(e_{k-2}+e_{k+2}) }.
	\end{equation*}
	and the right side is
	\begin{equation*}
		\sum_{h\in \left\{0,1\right\}^{c_\lambda}}q^{\frac{m_u(h)}{2}}(X')^{ind_{T'}(N')+(\sum \lambda_i+2)B_{T'}e_{k-1}+(a_\lambda+\sum h_i+2)B_{T'}e_k+2B_{T'}e_{k+2}}.
	\end{equation*}
	Here we see $u'$ as an element of $\left\{0,1\right\}^{c_\lambda}$ with the last $c_\lambda$ coordinate $h$ and identify them. Then we want to show
	\begin{align*}
		d-2c_\lambda e_k+(\sum h_i)(2e_{k-1})+(c_\lambda-(\sum h_i))(e_{k-2}+e_{k+2}) \\
		=ind_{T'}(N')+(\sum \lambda_i+2)B_{T'}e_{k-1}+(a_\lambda+\sum h_i+2)B_{T'}e_k+2B_{T'}e_{k+2}.
	\end{align*}
	Compare the entry of both side: the $k$-th entry is followed by
	\begin{equation*}
		c_\lambda-2c_\lambda=s-2(\sum \lambda_i+2)+2.
	\end{equation*}
	For $j\neq k$, the $j$-th entry of left side is 
	\begin{equation*}
		(ind_T(N))_j+(\sum \lambda_i+2)b_{j,k-1}+(s-a_\lambda)b_{jk}+2b_{j,k-2}+c_\lambda (\delta_{j,k-2}+\delta_{j,k+2}).
	\end{equation*}
	and the right side is
	\begin{equation*}
		(ind_{T'}(N'))_j+(\sum \lambda_i+2)b'_{j,k-1}+(a_\lambda+2) b'_{jk}+2b'_{j,k+2}.
	\end{equation*}
	By the following equation, two sides equal
	\begin{equation*}
		\begin{aligned}
			(ind_{\Gamma'}(N'))_j&=(ind_\Gamma(N))_j-s[-b_{jk}]_+ ,\\
			b'_{j,k-1}-b_{j,k-1}&=2[b_{jk}]_+, \\
			b'_{j,k-2}-b_{j,k-2}&=-[-b_{jk}]_+ ,\\
			\delta_{j,k-2}+\delta_{j,k+2}&=[b_{j,k}].
		\end{aligned}
	\end{equation*}
	Then we have the desired function by taking $m_0=v'_d$.
\end{proof}
\subsection{Summary} We can divide the string $w$ into different subword such that each subword $N$ does not containing $\tau_{k-2},\tau_{k-1},\tau_k,\tau_{k+1},\tau_{k+2}$ as its vertex or all the vertices are one of them. If $N$ does not containing any of them, then we let $\mu_k(N)=N$. If all the vertices of $N$ are one of them, then it must be one of the above case and we already define $\mu_k(N)$ for each case. 
\begin{proposition}
	Let $N'=\mu_k(N)$. There is a partition bijection $\varphi_N$ between $CS(N)$ and $CS(N')$ satisfying the polynomial property.
\end{proposition}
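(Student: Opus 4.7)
The plan is to assemble the proposition from the case analysis already carried out in Sections 5.2 and 5.3. First, I would verify that the decomposition of $w$ into subwords is well-posed: the subwords lying entirely inside $H=\{\tau_{k-2},\tau_{k-1},\tau_k,\tau_{k+1},\tau_{k+2}\}$ are separated from those disjoint from $H$ by single edges, so the enumeration above exhausts every possibility of a maximal $H$-subword and no subword is missed or double-counted. For subwords $N$ disjoint from $H$, one sets $\mu_k(N)=N$, takes $\varphi_N$ to be the identity on $CS(N)$, and observes via Corollary 5.10 that $(\mathrm{ind}_T(N))_k=0$, so $X^T(U)=(X')^T(U)$ holds for each $U\in CS(N)$ trivially, giving the polynomial property.

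Next, for each $H$-subword I would invoke the bijection $\varphi_N$ already built in the relevant case of Section 5.2 or 5.3. The work in those subsections shows two things: (i) a bijection between a partition of $CS(N)$ and a partition of $CS(\mu_k(N))$, and (ii) for each matched pair of blocks $R\leftrightarrow R'$ the polynomial identity
\begin{equation*}
	\sum_{U\in R}q^{\frac{v(U)}{2}}X^T(U)=\sum_{U'\in R'}q^{\frac{v'(U')}{2}}(X')^T(U')
\end{equation*}
with explicit weights supplied either by the direct expansions in the small cases or by the functions $m_u$ and $m_{u'}$ of the cyclic cases. So the proposition reduces to collecting these per-subword statements.

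The main obstacle, and the step I would treat most carefully, is showing that the partition bijections for the individual subwords glue together into a single partition bijection of $CS(N)$ satisfying the polynomial property for the whole string. The gluing on the combinatorial side is formal: a canonical submodule of $M(w)$ decomposes uniquely as a disjoint union of canonical submodules on the subwords (using the definition of $S(w)$ and Lemma 4.10), so $CS(N)=\prod_\ell CS(N_\ell)$, and the product of partitions gives a partition of $CS(N)$ with blocks indexed by tuples of blocks on each subword; $\varphi_N$ is then defined componentwise. The analytic content is the verification that
\begin{equation*}
	X^T(U_1\sqcup\cdots\sqcup U_t)=q^{\kappa}\,X^T(U_1)\cdots X^T(U_t)
\end{equation*}
up to a scalar $q^{\kappa}$ depending only on the ambient string (not on the choice of $U_\ell$ inside its block), so the per-subword polynomial identities multiply and yield the global identity after absorbing $\kappa$ into the weight $v$. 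This factorization follows because $\mathrm{ind}_T$ is additive on the decomposition of $M(w)$ into its substring summands, because $B_T\dim(U)=\sum_\ell B_T\dim(U_\ell)$, and because the commutation relation $X^gX^h=q^{\Lambda(g,h)/2}X^{g+h}$ produces a $q$-power that is constant across blocks since the dimension vectors of different blocks of a given subword agree modulo the free-vertex contributions along $e_k$ only, and the analogous statement holds on the $T'$-side by the identities
\begin{equation*}
	b'_{j,k-1}-b_{j,k-1}=2[b_{jk}]_+,\quad b'_{j,k-2}-b_{j,k-2}=-[-b_{jk}]_+
\end{equation*}
already used in the case analysis.

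Finally, combining the componentwise bijections and the multiplicative polynomial identity produces the partition bijection $\varphi_N$ on all of $CS(N)$ with the required polynomial property, and this also furnishes the weight functions $v,v'$ of the main theorem by adding the subword weights and the gluing constants $\kappa$. I would close by remarking that the decomposition and the gluing are symmetric in $T$ and $T'$, so applying $\mu_k$ to $\mu_k(N)$ and using the involutivity of mutation recovers $N$, confirming that $\varphi_N$ is genuinely a bijection.
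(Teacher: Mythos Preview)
You have over-read the scope of the proposition. In the paper, $N$ denotes a \emph{single subword} of the decomposition of $w$ (see the sentence immediately preceding the proposition: ``each subword $N$\ldots''), not the whole string. The proposition is therefore nothing more than a summary of the per-case constructions in Sections~5.2 and~5.3: for each of the listed shapes of $N$, a partition bijection $\varphi_N$ and the polynomial identities were exhibited explicitly there, and for $N$ disjoint from $H$ one takes $\varphi_N=\mathrm{id}$ and uses the corollary on the $k$-th entry of the index. The paper accordingly states the proposition without further proof. Your first two paragraphs already cover this, and that is all that is required.

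Everything from your third paragraph onward (the gluing) is the content of the material \emph{after} the proposition (the injection $l:CS(w)\hookrightarrow\prod_i CS(I_i)$, the compatibility lemma, the definition of $v,v'$, and the final theorem), not of the proposition itself. In that sketch two points need correcting. First, $CS(M(w))$ is not the full product $\prod_\ell CS(N_\ell)$ but a proper subset cut out by compatibility conditions at the arrows joining consecutive subwords; one must then verify, as the paper does in a separate lemma, that these boundary conditions are preserved under the per-subword bijections $\varphi_{I_i}$. Second, the cross $q$-power $\kappa=\sum_{i<j}\Lambda(U_i,U_j)$ is \emph{not} constant across a block (two $H$-subwords $I_i,I_j$ can both occur, and varying $U_i$ by $e_k$ changes $\Lambda(U_i,U_j)$ by the $k$-th entry of $\mathrm{ind}_T(I_j)+B_T\dim U_j$, which need not vanish). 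The paper does not need constancy: it simply absorbs the $U$-dependent term $\sum_{i<j}\Lambda(U_i,U_j)$ into the definition of $v(U)$, after which the global identity follows by multiplying the per-subword identities as elements of the quantum torus.
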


If we connect all the $\mu_k(N)$ in order then get a new word $\mu_k(w)$ in $Q(\Gamma)$.
\begin{lemma}
	$\mu_k(w)$ is a string in $Q(\Gamma)$. Further more, it corresponding to the same object as $w$ in $\mathcal{C}$.
\end{lemma}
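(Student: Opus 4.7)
The plan is to verify the two claims separately by a local analysis that parallels the decomposition of $w$ used throughout Sections 5.2 and 5.3. Recall that $w$ is split into maximal subwords of two types: those whose vertices all lie outside $H=\{\tau_{k-2},\tau_{k-1},\tau_k,\tau_{k+1},\tau_{k+2}\}$ (left fixed by $\mu_k$) and those whose vertices all lie in $H$ (replaced explicitly by one of Cases (1)--(8)). The strategy is to check that this local replacement both preserves the string conditions and matches the geometric flip at the level of arcs.

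First I would verify that $\mu_k(w)$ is a string in $Q(\Gamma')$. Inside each mutated subword this is immediate from the explicit shape of $\mu_k(N)$ given in Cases (1)--(8); the only subtle point is the interface between a mutated piece $\mu_k(N)$ and the adjacent unchanged subword. By construction of the decomposition, the interface vertex always lies in $\{\tau_{k-2},\tau_{k-1},\tau_{k+1},\tau_{k+2}\}$ and never equals $\tau_k$, and the arrow straddling the interface appears in both $Q(\Gamma)$ and $Q(\Gamma')$ with the same orientation. A short case-by-case inspection then shows that no forbidden pattern $\xleftarrow{\beta}\xrightarrow{\beta}$ and no element of the Jacobian relations of $A(\Gamma')$ is created at these interfaces; the Fomin--Zelevinsky mutation rule $Q(\Gamma')=\mu_k(Q(\Gamma))$ accounts for precisely the arrow reversals and new arrows needed so that the concatenation remains a valid string.

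For the second claim, the cleanest route is geometric. By Lemma~4.6 the string $w$ is recovered from the ordered transversal intersections of the arc $r$ with the internal arcs of $\Gamma$, and similarly $\mu_k(w)$ should be recovered from the intersections of the \emph{same} $r$ with $\Gamma'$. Since $\Gamma$ and $\Gamma'$ differ only by replacing $\tau_k$ with $\tau_{k'}$ inside the quadrilateral whose sides are (among) $\tau_{k-2},\tau_{k-1},\tau_{k+1},\tau_{k+2}$, the intersections of $r$ with arcs outside $H$ are unchanged, while the intersections with arcs in $H$ depend only on how $r$ enters, traverses and exits this quadrilateral. A direct inspection shows that the eight cases exhaust all possible local configurations of $r$ inside the quadrilateral, the degenerate cases of Section 5.3 corresponding to $\tau_{k-1}=\tau_{k+1}$; and in each case the prescribed $\mu_k(N)$ is exactly the sequence of crossings of that piece of $r$ with $\Gamma'$. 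Concatenating over all pieces yields the string of $r$ with respect to $\Gamma'$, which by Lemma~4.6 and the remark following it corresponds to the same string object $r\in C$.

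I expect the main obstacle to be the bookkeeping for the degenerate cyclic configurations in Cases (6)--(8) of Section 5.3, where $\tau_{k-1}=\tau_{k+1}$ and the quadrilateral is pinched, and for the boundary cases where a subword abuts $x_1$ or $x_s$ (Cases (4)--(5)) so that one side of the pentagon is absent. In each such setting the local picture of $r$ in the (possibly degenerate) quadrilateral must be drawn carefully and matched against the combinatorial replacement $\mu_k(N)$, paying particular attention to the finite cyclic piece $\tau_{k-1}\to\tau_k\leftarrow\tau_{k-1}\to\cdots\leftarrow\tau_{k-1}$, which corresponds geometrically to an arc winding around the self-folded region. Beyond this careful matching, no new ideas are needed: everything reduces to the already established local correspondences.
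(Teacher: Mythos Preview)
Your proposal is correct and follows essentially the same approach as the paper: a case-by-case verification at the interfaces for the string property, and the geometric arc--crossings interpretation (via Lemma~4.6) for the identification of the underlying object in $C$. Your write-up is in fact considerably more detailed than the paper's own proof, which compresses both steps into a few lines.
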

\begin{proof}
	First claim can be proved by check case by case. For each case, the last arrow of the form string and the first arrow of the latter string can not be in a relation. Actually all the case is given by the same subcurve of $r$ in surface of crossings with different triangulation. $w$ is the crossings of $r$ and $\Gamma$ and $\mu_k(w)$ is the crossings of $r$ and $\Gamma'$.
\end{proof}
For a string $w$, let $w'=\mu_k(w)$ and $I$ and $I'$ be the index set of $w$ and $w'$. Using the above rules, we cut $I$ into subsets $I_1,I_2,\cdots,I_p$. Recall we always identify the canonical submodules and subsets of index set. For example, we identify $w$(or $w'$) and $I$(or $I'$). In this case, $I_i$ is not always the submodule of $I$ but we see it as a string module. Then for any submodule $U$ with index set $I_U$ and $1\leq i\leq p$, let $U_i$ be the module with index set $I_U\cap I_i$. Then $U_i$ is a submodule of $I_i$. Similarly $I'$ is cut into $I'_1,I'_2,\cdots,I'_p$ and for any submodule $U'\in CS(w')$, $U'_i=I_{U'}\cap I'_i$.
\begin{proposition}
	There is an injective map $l:CS(w)\rightarrow \prod_{i=1}^{p}CS(I_i)$ given by $U\mapsto (U_i)_{1\leq i\leq p}$. Dually, there is an injective map $l:CS(w')\rightarrow \prod_{i=1}^{p}CS(I'_i)$ given by $U\mapsto (U'_i)_{1\leq i\leq p}$
\end{proposition}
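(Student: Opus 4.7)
The plan is to verify well-definedness and injectivity of $l$, both via the bijection $f:S(w)\to CS(M(w))$ identifying canonical submodules with admissible index sets.

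For well-definedness, I would take $U\in CS(w)$ with index set $I_U\in S(w)$ and verify that $U_i:=I_U\cap I_i$ belongs to $S(w_{I_i})$. Using the maximal-interval decomposition $I_U=J_1\sqcup\cdots\sqcup J_t$ guaranteed by $I_U\in S(w)$, where each $J_\ell$ is a substring of $w$ and $\max(J_\ell)+2\le\min(J_{\ell+1})$, the non-empty pieces $J_\ell\cap I_i$ give a candidate decomposition of $U_i$ inside $I_i$. The three things to check are: (a) each non-empty $J_\ell\cap I_i$ is an interval, immediate because $I_i$ is a contiguous block of indices; (b) the gap between consecutive non-empty $J_\ell\cap I_i$ is at least $2$ inside $I_i$, which follows because any index strictly between $\max(J_\ell\cap I_i)$ and $\min(J_{\ell+1}\cap I_i)$ lies in $I_i$ by contiguity, lies in the original gap between $J_\ell$ and $J_{\ell+1}$, and hence is not in $I_U$; and (c) each non-empty $J_\ell\cap I_i$ is a substring of $w_{I_i}$, i.e., the closure-under-arrow-action condition holds. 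Condition (c) is inherited at every internal boundary from the corresponding condition for $J_\ell$ in $w$, and is vacuous at the endpoints of $I_i$ since the subword $w_{I_i}$ carries no arrow beyond them.

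For injectivity, since the cut is a partition $I=I_1\sqcup\cdots\sqcup I_p$, the tuple $(U_i)_{1\le i\le p}$ recovers $I_U=\bigsqcup_i U_i$. A canonical submodule is determined by its index set via $f$, so $U\mapsto(U_i)$ is injective. The dual statement for $w'$ and $(I'_i)$ follows by the identical argument with $w$ replaced by $w'$.

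The main obstacle is point (c) above: verifying the substring condition at the new endpoints of $I_i$ created by the cut. This is where the design of the cut in Sections 5.2--5.3 is essential, as those rules enumerate precisely the subwords that can arise at vertices of $H=\{\tau_{k-2},\tau_{k-1},\tau_k,\tau_{k+1},\tau_{k+2}\}$; the check then reduces to confirming that for each enumerated subword shape, no canonical submodule of $M(w)$ can be severed at an internal vertex in a way that forces inclusion of a vertex outside $I_i$. Once this is confirmed case by case, the remainder of the proof is bookkeeping about disjoint unions of intervals.
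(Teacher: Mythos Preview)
The paper states this proposition without proof, treating it as immediate. Your argument supplies the details the paper omits and is correct, but your final paragraph introduces an unnecessary concern that partially contradicts what you already established.

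Specifically, your point (c) is already complete: at an endpoint of $I_i$ there is no arrow of $w_{I_i}$ beyond that vertex, so the substring condition there is vacuous, exactly as you wrote. At any interior point of $I_i$, if $a=\min(J_\ell\cap I_i)>\min I_i$ then necessarily $a=\min J_\ell$ (since $a-1\in I_i$), hence $a-1\notin I_U$, and the arrow $\alpha_{a-1}$ already points into $[a,b]$ because $J_\ell$ is a substring of $w$; the same arrow lies in $w_{I_i}$, so the condition transfers. No appeal to the specific shapes enumerated in Sections~5.2--5.3 is needed: the map $U\mapsto(U_i)$ is well-defined and injective for \emph{any} decomposition of $I$ into contiguous blocks $I_1,\ldots,I_p$. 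The particular cutting rules matter only later, for the partition bijection and the polynomial property, not for this proposition. You may therefore delete the ``main obstacle'' paragraph entirely; what remains is a clean proof that is more explicit than the paper's (nonexistent) one but follows the same underlying idea of identifying canonical submodules with their index sets.
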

Under this map, we identify $CS(w)$ with a subset of $\prod_{i=1}^{p}CS(I_i)$. So we can write a submodule $U\in CS(w)$ as $(U_i)_{1\leq i\leq p}$ where the following conditions are satisfied
\begin{enumerate}
	\item For any $1\leq j\leq p-1$, if $I_j$ and $I_{j+1}$ are connected by a left arrow $\leftarrow$, then the last vertex of $I_j$ is not in $U_j$ and the first vertex of $I_{j+1}$ is in $U_{j+1}$ can not simultaneously happen.
	\item For any $1\leq j\leq p-1$, if $I_j$ and $I_{j+1}$ are connected by a right arrow $\rightarrow$, then the last vertex of $I_j$ is in $U_j$ and the first vertex of $I_{j+1}$ is not in $U_{j+1}$ can not simultaneously happen.
\end{enumerate}
Similarly we write a submodule $U'\in CS(w')$ as $(U'_i)_{1\leq i\leq p}$ where the following conditions are satisfied
\begin{enumerate}
	\item For any $1\leq j\leq p-1$, if $I_j$ and $I_{j+1}$ are connected by a left arrow $\leftarrow$, then the last vertex of $I_j$ is not in $U'_j$ and the first vertex of $I_{j+1}$ is in $U'_{j+1}$ can not simultaneously happen.
	\item For any $1\leq j\leq p-1$, if $I_j$ and $I_{j+1}$ are connected by a right arrow $\rightarrow$, then the last vertex of $I_j$ is in $U'_j$ and the first vertex of $I_{j+1}$ is not in $U'_{j+1}$ can not simultaneously happen.
\end{enumerate}
We have a partition bijection $\varphi=(\varphi_{I_i})_{1\leq i\leq p}$ from $\prod_{i=1}^{p}CS(I_i)$ to $\prod_{i=1}^{p}CS(I'_i)$ given by the partition bijection $\varphi_{I_i}$ in summary.
\begin{lemma}
	Given $U\in \prod_{i=1}^{p}CS(I_i)$ and $U'\in \prod_{i=1}^{p}CS(I'_i)$. If $U_i\in \varphi_{I_i}(U'_i)$ for any $1\leq i\leq p$, then $U\in CS(w)$ if and only if $U'\in CS(w')$.
\end{lemma}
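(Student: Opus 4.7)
The plan is to reduce the equivalence to a local check at each junction between consecutive subwords $I_j$ and $I_{j+1}$, and then verify this check using the explicit partition bijections from Sections 5.2 and 5.3. First I would observe that membership in $CS(w)$ is exactly the conjunction, over all $j$ with $1\leq j\leq p-1$, of the two junction conditions stated immediately before the lemma, and similarly for $CS(w')$. Each such junction condition depends only on whether the last vertex of $I_j$ lies in $U_j$ and whether the first vertex of $I_{j+1}$ lies in $U_{j+1}$; moreover the direction of the connecting arrow is shared by $w$ and $w'$ since $\mu_k$ only replaces $\tau_k$ by $\tau_k'$. Therefore the lemma reduces to showing that, for each $i$, the partition bijection $\varphi_{I_i}$ preserves on each of its blocks the inclusion status of the first and last vertices of $I_i$.

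Next I would split on whether $I_j$ meets $H=\{\tau_{k-2},\tau_{k-1},\tau_k,\tau_{k+1},\tau_{k+2}\}$. If $I_j\cap H=\emptyset$ then $\mu_k(I_j)=I_j$ and $\varphi_{I_j}$ is the identity, so nothing is to check. Otherwise every vertex of $I_j$ lies in $H$; but since the adjacent subwords avoid $H$ by construction, every vertex of $I_j$ that is an endpoint shared with a neighbor must actually lie in $H\setminus\{\tau_k\}=\{\tau_{k-2},\tau_{k-1},\tau_{k+1},\tau_{k+2}\}$. These vertices are fixed by the mutation $\mu_k$, so $I_j'=\mu_k(I_j)$ has the same boundary vertices as $I_j$, and it makes sense to compare whether a given boundary vertex is included in $U_j$ versus in $U'_j$.

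I would then verify the boundary preservation case by case, running through Cases (1)--(6) of Section 5.2 and (1)--(8) of Section 5.3. For the simple cases, inspection of the explicit partition tables shows that in each block the boundary vertex either appears in every representative or in none, so its inclusion status is constant across the block and agrees on both sides. In the cyclic Cases (6)--(8) of Section 5.3, the boundary candidates are certain copies of $\tau_{k-1}$ (and in Cases (7) and (8) also $\tau_{k+2}$); these are recorded respectively by the parameter $\lambda$ and by the superscripts $^0,^1,^2,^3$, both of which are preserved by $\varphi_N$. The main obstacle is purely combinatorial bookkeeping across the many cases, but since $\tau_k$ (the unique vertex altered by $\mu_k$) is never itself a boundary vertex of an $H$-subword, each verification is direct. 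Once the boundary preservation is established, the two junction conditions on $U$ translate into exactly the same constraints on $U'$, and the equivalence follows.
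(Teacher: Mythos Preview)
Your proposal is correct and follows essentially the same approach as the paper. The paper's proof is a terse one-liner---``by checking all the case we can see the vertices in the endpoint stay invariant except for the endpoints of $w$''---and your argument is a careful expansion of exactly this idea: reduce to the junction conditions, observe that the boundary vertices at a junction are never $\tau_k$ (since $\tau_k$ can only be an endpoint of the whole string $w$), and then verify case by case that each $\varphi_{I_i}$ preserves the inclusion status of those boundary vertices on every block.
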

\begin{proof}
	For any $1\leq j\leq p-1$, if $I_j$ and $I_{j+1}$ are connected by a left arrow $\leftarrow$, then by checking all the case we can see the vertices in the endpoint stay invariant except for the endpoints of $w$. So the conditions are satisfied in $U$ if and only if in $U'$. The other situation is the same.
\end{proof}
\begin{remark}
	From the above lemma, we see $\varphi$ can be restricted to a partition bijection between $CS(w)$ and $CS(w')$. For any $1\leq i\leq p$ and a partition $S_i\in CS(I_i)$, we have a partition of $CS(w)$ given by
	\begin{equation*}
		\left\{U:U_i\in S_i \ for \ any \ 1\leq i\leq p\right\},
	\end{equation*}
	which is denoted by $S_1\times \cdots \times S_p$.
\end{remark}
Then for any $U\in CS(w)$ we have
\begin{equation*}
	X^{ind_{T}(w)+B_{T}dimU}=q^{\frac{\beta}{2}} X^{ind_{T}(I_1)+B_{T}dimU_1}\cdots X^{ind_{T}(I_p)+B_{T}dimU_p}.
\end{equation*}
where
\begin{equation*}
	\beta=-\sum_{1\leq i<j\leq p}\Lambda(U_i,U_j).
\end{equation*}
with $\Lambda(U_i,U_j)=\Lambda(ind_{T}(I_i)+B_{T}dimU_i,ind_{T}(I_j)+B_{T}dimU_j)$.
\par Similarly for any $U'\in CS(w')$ we have
\begin{equation*}
	(X')^{ind_{T'}(w')+B_{T'}dimU'}=q^{\frac{\beta'}{2}} (X')^{ind_{T'}(I'_1)+B_{T'}dimU'_1}\cdots (X')^{ind_{T'}(I'_p)+B_{T'}dimU'_p}.
\end{equation*}
where
\begin{equation*}
	\beta'=-\sum_{1\leq i<j\leq p}\Lambda'(U'_i,U'_j).
\end{equation*}
with $\Lambda'(U'_i,U'_j)=\Lambda'(ind_{T'}(I'_i)+B_{T'}dimU'_i,ind_{T'}(I'_j)+B_{T'}dimU'_j)$.
\begin{definition}For any $U=(U_i)\in CS(w)$ and $U'=(U'_i)\in CS(w')$
	\begin{equation*}
		v(U)=v_{I_1}(U_1)+\cdots+v_{I_p}(U_p)+\sum_{1\leq i<j\leq p}\Lambda(U_i,U_j),
	\end{equation*}
	\begin{equation*}
		v'(U)=v_{I'_1}(U'_1)+\cdots+v_{I'_p}(U'_p)+\sum_{1\leq i<j\leq p}\Lambda'(U'_i,U'_j).
	\end{equation*}
\end{definition}
Then
\begin{equation*}
	\sum_{U\in S_1\times\cdots\times S_p}q^{\frac{v(U)}{2}}X^{T}(U),
\end{equation*}
can be written as
\begin{equation*}
	\sum_{U_1\in S_1}\cdots \sum_{U_p\in S_p}(q^{\frac{v_{I_1}(U_1)}{2}}X^T(U_1))\cdots(q^{\frac{v_{I_p}(U_p)}{2}}X^T(U_p)).
\end{equation*}
Dually, 
\begin{equation*}
	\sum_{U'\in S'_1\times\cdots\times S'_p}q^{\frac{v'(U')}{2}}(X')^{T}(U'),
\end{equation*}
can be written as
\begin{equation*}
	\sum_{U'_1\in S'_1}\cdots \sum_{U'_p\in S'_p}(q^{\frac{v'_{I'_1}(U'_1)}{2}}(X')^T(U'_1))\cdots(q^{\frac{v'_{I'_p}(U'_p)}{2}}(X')^T(U'_p)).
\end{equation*}
\begin{theorem}
	The partition bijection given by $\varphi$ has the polynomial property: For any partition $S$
	\begin{equation*}
		\sum_{U\in S}q^{\frac{v(U)}{2}}X^{ind_{T}(w)+B_{T}dimU}=\sum_{U'\in \varphi(S)}q^{\frac{v'(U')}{2}}(X')^{ind_{T'}(w')+B_{T'}dimU'}.
	\end{equation*}
\end{theorem}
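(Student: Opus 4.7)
The plan is to reduce the global identity to the case-by-case local identities established for each subword $I_i$ in Sections 5.2 and 5.3. The definitions of $v$ and $v'$ have been set up precisely so that the quantum twist arising when multiplying the local factors $X^T(U_i)$ in the quantum torus cancels against the $\Lambda$ correction terms embedded in $v$, leaving a clean multiplicative factorization.

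First, I would make the factorization explicit. Using the quantum torus relation stated just before Definition 5.16, namely
\begin{equation*}
X^{ind_{T}(w)+B_{T}\dim U}=q^{-\frac{1}{2}\sum_{1\leq i<j\leq p}\Lambda(U_i,U_j)}\prod_{i=1}^{p} X^{T}(U_i),
\end{equation*}
together with the definition $v(U)=\sum_i v_{I_i}(U_i)+\sum_{i<j}\Lambda(U_i,U_j)$, the $\Lambda$ terms telescope and yield
\begin{equation*}
q^{v(U)/2}X^{ind_{T}(w)+B_{T}\dim U}=\prod_{i=1}^p q^{v_{I_i}(U_i)/2}X^{T}(U_i),
\end{equation*}
with the dual identity holding on the primed side.

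Next, for a partition class $S=S_1\times\cdots\times S_p$, the sum factors as a product of local sums
\begin{equation*}
\sum_{U\in S}q^{v(U)/2}X^{ind_{T}(w)+B_{T}\dim U}=\prod_{i=1}^p\Bigl(\sum_{U_i\in S_i}q^{v_{I_i}(U_i)/2}X^{T}(U_i)\Bigr).
\end{equation*}
Each local factor can then be rewritten using the local polynomial property already established for $\varphi_{I_i}$ in the case analyses of Sections 5.2 and 5.3: the $i$-th factor transforms into $\sum_{U'_i\in\varphi_{I_i}(S_i)}q^{v'_{I'_i}(U'_i)/2}(X')^{T}(U'_i)$. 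Reassembling the product of transformed local sums using the dual version of the factorization step produces the right-hand side of the theorem.

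The main obstacle is reconciling this factor-wise argument with the restriction to $CS(w)$ and $CS(w')$, since membership there imposes the gluing conditions (1) and (2) relating consecutive pieces $U_j$ and $U_{j+1}$, so a priori the sum over $U\in S$ is smaller than the full Cartesian product of local sums. This is exactly what the lemma immediately preceding the theorem resolves: the local bijection $\varphi_{I_i}$ preserves the occupation state of each endpoint vertex of $I_i$, so a tuple $(U_i)$ defines an element of $CS(w)$ if and only if its image $(\varphi_{I_i}(U_i))$ defines an element of $CS(w')$. Consequently $\varphi(S_1\times\cdots\times S_p)=\varphi_{I_1}(S_1)\times\cdots\times\varphi_{I_p}(S_p)$ as partition classes, with the same invalid tuples excluded on both sides, so no terms are lost or gained when passing between the unprimed and primed expressions.
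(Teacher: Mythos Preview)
Your proposal is correct and follows essentially the same route as the paper: both exploit the factorization $q^{v(U)/2}X^{ind_T(w)+B_T\dim U}=\prod_i q^{v_{I_i}(U_i)/2}X^T(U_i)$ built into the definition of $v$, apply the local polynomial identities from the case analysis to each factor, and reassemble via the dual factorization on the primed side. One point worth sharpening in your last paragraph: the factorization of the \emph{sum} over $S$ into a product of local sums requires not merely that invalid tuples are excluded symmetrically on both sides, but that within each local class $S_i$ the endpoint occupation is constant (as is visible in every case of Sections~5.2--5.3), so that a nonempty block $S_1\times\cdots\times S_p$ is already the full Cartesian product and no tuples are excluded at all.
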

\begin{proof}
	By definition, the partition $S$ must be of form $S_1\times \cdots \times S_p$ where $S_i$ is a partition of $CS(I_i)$ for any $1\leq i\leq p$. Then $\varphi(S)=\varphi_{I_1}(S_1)\times\cdots\times\varphi_{I_p}(S_p)$ and
	\begin{equation*}
		\sum_{U_i\in S_i}q^{\frac{v_{I_i}(U_i)}{2}}X^T(U_i)=\sum_{U'_i\in \varphi_{I_i}(S_i)}q^{\frac{v_{I'_i}(U'_i)}{2}}(X')^T(U'_i).
	\end{equation*}
	So
	\begin{align*}
		\sum_{U\in S}q^{\frac{v(U)}{2}}X^{ind_{T}(w)+B_{T}dimU}=\sum_{U\in S_1\times\cdots\times S_p}q^{\frac{v(U)}{2}}X^{T}(U).
	\end{align*}
	is equal to
	\begin{equation*}
		\sum_{U_1\in S_1}\cdots \sum_{U_p\in S_p}(q^{\frac{v_{I_1}(U_1)}{2}}X^T(U_1))\cdots(q^{\frac{v_{I_p}(U_p)}{2}}X^T(U_p)).
	\end{equation*}
	By the construction in summary, we know the above equals
	\begin{equation*}
		\sum_{U'_1\in \varphi_{I_1}(S_1)}\cdots \sum_{U'_p\in \varphi_{I_p}(S_p)}(q^{\frac{v'_{I'_1}(U'_1)}{2}}(X')^T(U'_1))\cdots(q^{\frac{v'_{I'_p}(U'_p)}{2}}(X')^T(U'_p)).
	\end{equation*}
	By the definition of $v'$, this equals
	\begin{equation*}
		\sum_{U'\in \varphi_{I_1}(S_1)\times\cdots\times \varphi_{I_p}(S_p)}q^{\frac{v'(U')}{2}}(X')^T(U').
	\end{equation*}
	which is the desired form.
\end{proof}
Then we can get the following corollary by the definition of partition bijection
\begin{theorem}
	There exists functions $v$ and $v'$ valued on $\mathbb{Z}$ satisfying
	\begin{equation*}
		\sum_{U\in CS(M(w))}q^{\frac{v(U)}{2}X^{ind_T(w)+B_T dim(U)}}=\sum_{U'\in CS(M(w'))}q^{\frac{v'(U')}{2}(X')^{ind_{T'}(w')+B_{T'} dim(U')}}.
	\end{equation*}
\end{theorem}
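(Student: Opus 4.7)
The statement is a direct consequence of the preceding theorem (which establishes the polynomial property class by class), obtained by summing over all classes of a single chosen partition.

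The plan is as follows. First, I fix the partition $\mathcal{B}$ of $CS(M(w))$ induced by the decomposition $I = I_1 \cup \cdots \cup I_p$ of the index set together with the case-by-case partitions $S_i$ of each $CS(I_i)$ built in Sections 5.2 and 5.3. Under the injective map $l \colon CS(M(w)) \hookrightarrow \prod_{i=1}^p CS(I_i)$, every class of $\mathcal{B}$ has the form $S_1 \times \cdots \times S_p$, and the partition bijection $\varphi = (\varphi_{I_i})_{1 \leq i \leq p}$ sends it to $\varphi_{I_1}(S_1) \times \cdots \times \varphi_{I_p}(S_p) \subseteq CS(M(w'))$. The compatibility lemma stating that, whenever $U_i \in \varphi_{I_i}(U'_i)$ for all $i$, the tuple $(U_i)$ lies in $CS(M(w))$ if and only if $(U'_i)$ lies in $CS(M(w'))$ shows that $\varphi$ restricts to a genuine bijection between partitions of $CS(M(w))$ and $CS(M(w'))$.

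Next, I apply the preceding theorem to each class $S \in \mathcal{B}$, yielding
\begin{equation*}
\sum_{U \in S} q^{v(U)/2} X^{\mathrm{ind}_T(w) + B_T \dim U} \;=\; \sum_{U' \in \varphi(S)} q^{v'(U')/2} (X')^{\mathrm{ind}_{T'}(w') + B_{T'} \dim U'}.
\end{equation*}
Summing this identity over $S \in \mathcal{B}$, the left-hand side collapses to $\sum_{U \in CS(M(w))} q^{v(U)/2} X^{\mathrm{ind}_T(w) + B_T \dim U}$ because $\mathcal{B}$ partitions $CS(M(w))$, and the right-hand side collapses to $\sum_{U' \in CS(M(w'))} q^{v'(U')/2} (X')^{\mathrm{ind}_{T'}(w') + B_{T'} \dim U'}$ because $\varphi(\mathcal{B})$ partitions $CS(M(w'))$. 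This is exactly the asserted equality.

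The substantive work has already been done: the construction of $v$ and $v'$ from the subword decomposition, and the case-by-case verification of the polynomial property. The only thing to check at this final step is that the cross terms $\sum_{i<j}\Lambda(U_i,U_j)$ and $\sum_{i<j}\Lambda'(U'_i,U'_j)$ in the definitions of $v$ and $v'$ correctly assemble the local $q$-powers into the global ones; but this is the content of the factorization identities displayed immediately before the preceding theorem, so no further computation is required. Hence there is no real obstacle at this stage — the main technical difficulty was handled in proving the class-by-class polynomial property, and the present statement is simply its global aggregation.
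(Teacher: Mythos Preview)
Your proposal is correct and matches the paper's approach exactly: the paper simply states that this theorem is a corollary ``by the definition of partition bijection,'' which is precisely your argument of summing the class-by-class identity from the preceding theorem over all classes of the partition. If anything, you have spelled out more carefully than the paper does why the two sides collapse to sums over $CS(M(w))$ and $CS(M(w'))$ respectively.
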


\subsection{Example}
Consider an annulus with three marked points, and choose an appropriate triangulation $\Gamma$ such that $Q(\Gamma)$ is \begin{tikzcd}
	1 \arrow[r] \arrow[rr, bend left] & 2 \arrow[r] & 3
\end{tikzcd}, see \cite{Fomin2006ClusterAA} for additional details. Take the string $w:1\rightarrow 2$ and consider the mutation at vertex 1. Then the new quiver is \begin{tikzcd}
1' & 2 \arrow[l] \arrow[r] & 3 \arrow[ll, bend left]
\end{tikzcd}. And the corresponding string $w'$ is the trivial string with a single vertex labeled 2. This is the case $(4)$ in Section 5.2 and $B(\Gamma)=\begin{pmatrix}
0 & -1 & -1 \\
1 & 0 & -1 \\
1 & 1 & 0
\end{pmatrix}$. 
\par The string module $M(w)$ is the injective module $I(2)$. Let $T=T_1\oplus T_2\oplus T_3$ be the cluster tilting object. By the identity $M(w)=Ext^1_{\mathcal{C}}(T,T_2[1])$, we know that the corresponding string object in $\mathcal{C}$ is $T_2[1]$, and its index is $-e_2$. The string module $M(w)$ has three canonical submodules $N_1, N_2, N_3$, with index sets $\emptyset, \left\{2\right\}, \left\{1,2\right\}$, respectively.
\begin{equation*}
	X^T(N_1)=X^{ind_T(M(w))+B(\Gamma)0}=X^{-e_2}.
\end{equation*}
\begin{equation*}
	X^T(N_2)=X^{ind_T(M(w))+B(\Gamma)e_2}=X^{-e_2-e_1+e_3}.
\end{equation*}
\begin{equation*}
	X^T(N_3)=X^{ind_T(M(w))+B(\Gamma)(e_1+e_2)}=X^{-e_1+2e_3}.
\end{equation*}
Therefore, the expansion formula is
\begin{equation*}
	X^T_{M(w)}=X^T(N_1)+X^T(N_2)+X^T(N_3)=X^{-e_2}+X^{-e_2-e_1+e_3}+X^{-e_1+2e_3}.
\end{equation*}
Similarly, let $\Gamma'$ be the new triangulation and $B(\Gamma')=\begin{pmatrix}
	0 & 1 & 1 \\
	-1 & 0 & -1 \\
	-1 & 1 & 0
\end{pmatrix}$. Let $w'$ be the trivial string with a single vertex labeled 2. Then the string module $M(w')$ is the injective module associated with the vertex 2. For the same reason, its index is $-e_2$. The string module $M(w')$ has two canonical submodules $N'_1, N'_2$,  whose index sets are $\emptyset, \left\{1\right\}$, respectively.
\begin{equation*}
	(X')^T(N'_1)=(X')^{ind_{T'}(M(w'))+B(\Gamma')0}=(X')^{-e_2}.
\end{equation*}
\begin{equation*}
	(X')^T(N'_2)=(X')^{ind_{T'}(M(w'))+B(\Gamma')e_1}=(X')^{-e_2+e_1+e_3}.
\end{equation*}
Hence, the expansion formula is
\begin{equation*}
	X^{T'}_{M(w')}=	(X')^T(N'_1)+(X')^T(N'_2)=(X')^{-e_2}+(X')^{-e_2+e_1+e_3}.
\end{equation*}
By the mutation rules given in Definition 3.1, we have $X^T_{M(w)}=X^{T'}_{M(w')}$.

\section*{Acknowledgement}{Fan Xu was supported by Natural Science Foundation of China
	[Grant No. 12031007].}

\nocite{*}
\bibliographystyle{plain}
\bibliography{quantum_cluster_variables}
\end{document}